\DeclareMathOperator*{\argmax}{arg\,max}
\DeclareMathOperator*{\argmin}{arg\,min}
\newcommand{\R}{\mathbb{R}}
\newcommand{\C}{\mathbb{C}}
\newcommand{\N}{\mathbb{N}}
\newcommand\prox{\mathrm{prox}}
\newcommand\St{\mathrm{St}}
\newcommand\Toep{\mathrm{Toep}}
\newcommand{\tT}{\mathrm{T}}
\newcommand{\blkdiag}{\mathrm{diag}}
\DeclareMathOperator{\Circ}{Circ}
\DeclareMathOperator*{\diag}{diag}
\newcommand{\sym}{\textnormal{Sym}}
\theoremstyle{plain}
\newtheorem{lemma}{Lemma}
\newtheorem{thm}[lemma]{Theorem}
\newtheorem{proposition}[lemma]{Proposition}
\theoremstyle{definition}
\newtheorem{example}[lemma]{Example}
\newtheorem{remark}[lemma]{Remark}
\newcommand{\toprule}{\hrule height.8pt depth0pt \kern2pt} 
\newcommand{\midrule}{\kern2pt\hrule\kern2pt} 
\newcommand{\bottomrule}{\kern2pt\hrule\relax}
\newcommand{\algcaption}[2][]{%
  \refstepcounter{algorithm}%
    {\addcontentsline{loa}{figure}{\protect\numberline{\thealgorithm}{\ignorespaces #2}}}
    {\addcontentsline{loa}{figure}{\protect\numberline{\thealgorithm}{\ignorespaces #1}}}%
  \toprule
  \textbf{Algorithm~\thealgorithm}\ #2\par 
  \midrule
}
\begin{document}
\title{Convolutional Proximal Neural Networks
\\
and Plug-and-Play Algorithms}

\author{
Johannes Hertrich\footnotemark[1]
\and
Sebastian Neumayer\footnotemark[1]
\and
Gabriele Steidl\footnotemark[1]
	}

\maketitle             
\footnotetext[1]{
Institute of Mathematics,
	TU Berlin,
	Stra{\ss}e des 17.~Juni 136, 
	10623 Berlin, Germany,
	\{j.hertrich,neumayer,steidl\}@math.tu-berlin.de.
	}

\begin{abstract}
In this paper, we introduce convolutional proximal neural networks (cPNNs), which are by construction averaged operators.
For filters of full length, we propose a stochastic gradient descent algorithm on a submanifold of the Stiefel manifold to train cPNNs.
In case of filters with limited length, we design algorithms for minimizing functionals that  approximate the orthogonality constraints imposed on the operators by penalizing the least squares distance to the identity operator.
Then, we investigate how scaled cPNNs with a prescribed Lipschitz constant can be used for denoising signals and images, where the achieved quality depends on the Lipschitz constant.
Finally, we apply cPNN based denoisers within a Plug-and-Play (PnP) framework and provide convergence results for the corresponding PnP forward-backward splitting algorithm based on an oracle construction.
\end{abstract}

\section{Introduction} \label{sec:intro}
Building neural networks (NNs) with special properties and stability guarantees has attracted growing interest over the last few years.
In particular, it turned out that controlling the Lipschitz constant of neural networks \cite{GFPC18,MKKY2018,SGL2019} is an important step towards increasing robustness, e.g., against adversarial attacks \cite{TSS2018}.
In this paper, we are interested in NNs that are averaged with parameter $t \in (0,1)$, see \cite{Kr55,Ma53} for the first attempt at averaged operators.
Of special interest are operators with parameter $t = \frac12$. 
These include so-called proximal operators, which were extensively used 
in variational image processing lately \cite{BPCPE11}.
Indeed, averagedness of an operator requires more than just having Lipschitz constant 1, and the convergence of certain iteration schemes relies on this property.
A well-known example is the iterative soft thresholding algorithm (ISTA) \cite{DDM04}, which is itself a special case of forward-backward splitting (FBS) algorithms \cite{CW05}. 
Convergence of the Douglas--Rachford  algorithm 
and the alternating direction method of multipliers (ADMM) can be shown using properties of averaged operators.
The relation between the later algorithms is investigated in \cite{CKCH2020,EB92,Se2011}.

Recently, a certain proximal step in these algorithms was replaced by powerful denoisers such as BM3D \cite{DFKE2007,DKE2012} or NNs \cite{SWK2019} without any convergence guarantees.
This technique is meanwhile known as Plug-and-Play (PnP) algorithms \cite{SVW2016,VBW13} and led to improved results in certain applications, e.g., \cite{CWE2016,GJNMU2018,MMHC2017,Ono2017,TBF2017}.
The idea was also applied for various other optimization methods such as half-quadratic minimization \cite{ZZGZ2017} or the primal dual hybrid gradient algorithm \cite{MMHC2017}.
Using slightly different ideas, PnP iterations based on the D-AMP algorithm are constructed in \cite{DMM2009}.
Here, a separate denoiser is trained for each layer, an approach also known as algorithmic unrolling \cite{MLE19}. 	
To remain close to proximal operators as denoiser choice for FBS-PnP, the authors of \cite{CLPKS2017} proposed a NN trained to be the orthogonal projection onto the set of natural images. 	
Closely related to PnP is an approach called regularization by denoising (RED) \cite{REM2017}.
In this setting, an objective function of the form
$\|Ax-y\|^2+\rho(x)$ is minimized, where the regularizer $\rho$ is built from a NN denoiser.
Unfortunately, convergence of FBS-PnP can be only guaranteed if the denoiser is averaged, and, even worse, convergence of ADMM-PnP if the denoiser is $\frac12$-averaged.
These requirements are clearly not fulfilled by BM3D and general NNs.
Indeed, it was shown in \cite{SKM2019} that we do not have numerical convergence within FBS-PnP for DnCCN \cite{{ZZCMZ2017}}. 
Additionally, we construct for any $t\in(\tfrac12,1]$ a $t$-averaged operator such that ADMM-PnP diverges.
In \cite{HHNPSS2019}, so-called PNNs were considered, which are by construction averaged and hence a natural choice for PnP algorithms.
Note that averaged NNs can be also built using the definition of averagedness directly, see \cite{TRPW20}.

In this paper, we generalize the framework of PNNs from \cite{HHNPSS2019} to convolutional ones.
We want to emphasize that this generalization is vital for working with real-world image data.
To this end, we first cover the theoretical background on the structure of the minimization domain, 
before we discuss the actual training procedure.
It turns out that the case of full filter length can be handled by considering
submanifolds of the Stiefel manifold.
For filters with limited length, the situation is completely different and we propose a penalized version of the problem instead.
In order to improve the expressibility of the constructed NNs, we scale the cPNNs with a factor $\gamma$, which is an upper bound for their Lipschitz constant.
Our numerical experiments confirm that this scaling indeed leads to better denoisers.
To ensure that the networks remain averaged, we have to modify them using an additional oracle image.
Based on this modification, we can show that the proposed FBS-PnP schemes are convergent.
For smaller noise levels, we observe that our scaled cPNNs are numerically $\frac12$-averaged,
which justifies their usage in ADMM-PnP algorithms.

The outline of this paper is as follows:
In Section~\ref{sec:prelim}, 
we recall tools from convex analysis and facts about Stiefel manifolds that are necessary to derive the stochastic gradient descent algorithm on these manifolds.
The construction of PNNs as proposed in \cite{HHNPSS2019} is briefly reviewed in Section~\ref{sec:PNN}.
Next, this framework is extended to the convolutional setting, called cPNNs, in Section~\ref{sec:cPNN}.
Here, we first investigate filters with full length, 
before we resort to filters with limited length in the second part.
In Section~\ref{sec:DenoisePnNN}, we show how scaled cPNNs with an a priori upper-bounded Lipschitz constant can be used to construct image denoisers. 
As we do not want to retrain a network for every noise level, we propose to use cPNN based denoisers within PnP algorithms in Section~\ref{sec:PnP}.
Here, it is important that the denoisers have not only Lipschitz constant 1, but are actually averaged.
For an oracle version of the denoiser, we prove convergence of the PnP-FBS algorithm.
Numerical results of PnP methods with cPNNs for image denoising and deblurring are provided in Section~\ref{sec:numerics}.
Finally, conclusions are drawn in Section~\ref{sec:conclusions}.

\section{Preliminaries} \label{sec:prelim}
Throughout this paper, we denote by $I_d$ the $d \times d$ identity matrix, by
$1_d$ the $d$-dimensional vector containing only 1s, and by $\|\cdot\|$ the Euclidean norm.

\paragraph{Convex analysis}
In the following, $\Gamma_0(\R^d)$ denotes the set of proper, convex, lower semi-continuous 
functions on $\R^d$ mapping into $(-\infty,  \infty]$. 
For $f \in \Gamma_0(\R^d)$, the \emph{proximity operator} $\prox_{f}\colon \R^d \rightarrow \R^d$ 
is defined by
\begin{align}\label{prox_usual}
\prox_{f} (x) 
&\coloneqq 
\argmin_{y \in \R^d} \bigl\{ \tfrac12 \|x-y\|^2 +  f(y) \bigr\}.
\end{align}
It was shown by Moreau \cite[Cor.~10c]{Moreau65} that
$A \colon  \R^d \rightarrow \R^d$ is a proximity operator of some function $f \in \Gamma_0(\R^d)$
if and only if it is non-expansive 
and has a potential $\varphi \in \Gamma_0(\R^d)$, i.e.,
 $A(x) = \nabla \varphi(x)$ for all $x \in \R^d$.

In this work, we focus on NNs that are so-called averaged operators.
Recall that an operator $A\colon \R^d \rightarrow \R^d$ is \emph{$t$-averaged} if there exists a non-expansive 
operator $R\colon\R^d \rightarrow \R^d$ such that
$$A = t R + (1-t) I_d \quad \mathrm{for\;  some} \quad t \in (0,1).$$
Clearly, averaged operators are non-expansive.
The averaged operators $A \colon  \R^d \rightarrow \R^d$ with $t=\frac12$ are exactly the \emph{firmly non-expansive operators}
fulfilling
$$
\|Ax - Ay\|^2 \le \langle Ax - Ay,x-y \rangle
$$
for all $x,y \in \R^d$.
In particular, every proximity operator is firmly non-expansive. 
Using Moreau's characterization, it can be seen that the opposite is in general not true.
However, for functions on $\mathbb R$ we have the following proposition, which follows immediately from Moreau's result
\cite{CP2018}.

\begin{proposition}\label{prop:stab}
For a  function $\sigma\colon \mathbb R \rightarrow \mathbb R$ the following
properties are equivalent:
\begin{itemize}
\item[i)] $\sigma = \prox_{f}$ for some $f \in \Gamma_0(\mathbb R)$,
\item[ii)] $\sigma$ is $\frac12$-averaged,
\item[iii)] $\sigma$ is monotone increasing and non-expansive. 
\end{itemize}
\end{proposition}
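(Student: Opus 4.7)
The plan is to prove the three implications (i) $\Rightarrow$ (ii) $\Rightarrow$ (iii) $\Rightarrow$ (i), using Moreau's characterization (proximity operator iff non-expansive and a gradient of some $\varphi \in \Gamma_0$) as the main black box.

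First I would dispatch (i) $\Rightarrow$ (ii) by appealing to the statement already made in the preliminary text that every proximity operator is firmly non-expansive, i.e., $\frac12$-averaged. No new work is required here.

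For (ii) $\Rightarrow$ (iii), I would write out the $\frac12$-averaged inequality
\[
(\sigma(x)-\sigma(y))^2 \le (\sigma(x)-\sigma(y))(x-y)
\]
for arbitrary $x,y\in\mathbb R$. The right-hand side is then non-negative, which directly yields monotonicity: $(\sigma(x)-\sigma(y))(x-y)\ge 0$. Whenever $\sigma(x)\ne\sigma(y)$, dividing the inequality by $|\sigma(x)-\sigma(y)|$ (which is legitimate after splitting into the two sign cases of $\sigma(x)-\sigma(y)$) gives $|\sigma(x)-\sigma(y)|\le|x-y|$, so $\sigma$ is non-expansive as well. The case $\sigma(x)=\sigma(y)$ is trivial.

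The main step is (iii) $\Rightarrow$ (i), where I would use Moreau's characterization. Since $\sigma$ is non-expansive on $\mathbb R$, it is continuous, so the function
\[
\varphi(x) \coloneqq \int_0^x \sigma(t)\,\mathrm dt
\]
is well-defined, $C^1$, and real-valued on all of $\mathbb R$, with $\varphi' = \sigma$. Monotone increase of $\sigma$ forces $\varphi$ to be convex, and the continuity of $\varphi$ makes it lower semi-continuous. Being real-valued it is proper, so $\varphi\in\Gamma_0(\mathbb R)$. Thus $\sigma = \nabla\varphi$ is the gradient of a function in $\Gamma_0(\mathbb R)$ and is non-expansive, so by Moreau's characterization recalled above, $\sigma = \prox_f$ for some $f\in\Gamma_0(\mathbb R)$.

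The main obstacle is the construction of the potential in (iii) $\Rightarrow$ (i); the other two implications are essentially one-line consequences of definitions. The argument relies crucially on the one-dimensional setting, since only there does monotonicity of $\sigma$ immediately upgrade to integrability along a canonical path and convexity of the potential; in higher dimensions one would additionally need a symmetry condition on the Jacobian to guarantee that a potential exists at all.
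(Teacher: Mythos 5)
Your proof is correct and follows exactly the route the paper intends: the paper gives no argument beyond citing that the equivalence "follows immediately from Moreau's result," and your three implications — firm non-expansiveness of proximity operators, the firmly non-expansive inequality yielding monotonicity and non-expansiveness, and the construction of the potential $\varphi(x)=\int_0^x\sigma(t)\,\mathrm{d}t$ to invoke Moreau's characterization — are precisely the details being elided. Your closing remark about why the one-dimensional setting is essential (automatic existence of a convex potential) is also accurate.
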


Some useful  properties of averaged operators are given in the following theorem.
The first three parts follow directly by definition and the last two can be found in \cite{CY15}.

\begin{thm}[Properties of averaged operators]\label{alpha_lin}
\hspace{1em}
\begin{itemize}
\item[i)] If $A\colon \R^{d} \rightarrow \R^{d} $ is Lipschitz continuous with Lipschitz constant $L < 1$, then
$A$ is averaged for every parameter $t \in [(L+1)/2,1)$.
\item[ii)] If $A\colon \R^{d} \rightarrow \R^{d} $ is averaged with parameter $t$, then it is averaged for
every parameter in $[t,1)$.
\item[iii)] A linear operator $A\in \R^{d,d}$ is averaged with parameter $t \in (0,1)$
if and only if the matrix
$
(2t-1) I_d - A^\tT A + (1-t) (A + A^\tT)
$
is positive semidefinite.
In particular, any symmetric $A$ is averaged if and only if all its eigenvalues are in $(-1,1]$,
and it is $\frac12$-averaged if all its eigenvalues are in $[0,1]$.
\item[iv)] The concatenation of $K$ averaged operators $A_k\colon \R^{d} \rightarrow \R^{d}$ with parameters $t_k \in (0,1)$, $k=1,\ldots,K$,
is an averaged operator with parameter 
$$
t = 
\Big(1+ \big(\sum_{k=1}^K \frac{t_k}{1-t_k}\big)^{-1} \Big)^{-1} 
\le
\frac{K}{(K-1) + \frac{1}{\max_k t_k} }.
$$
\item[v)]
If $A\colon \R^{d} \rightarrow \R^{d}$ 
is an averaged operator having a nonempty  fixed point set,
then the sequences of iterates $\{x^{(r)}   \}_{r}$ generated by $x^{(r+1)} = A x^{(r)}$
converges for every starting point $x^{(0)}$ to a fixed point of $A$.
\end{itemize}
\end{thm}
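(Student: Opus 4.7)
The plan is to address parts i)--iii) by direct manipulation of the defining identity $A = tR + (1-t)I_d$, part iv) by reducing to the two-operator case via an equivalent quantitative characterization of averagedness and then inducting, and part v) by a Fej\'er-monotonicity argument in the Krasnoselski--Mann style.

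For i), solving for $R = (A - (1-t)I_d)/t$ and using $L$-Lipschitz continuity of $A$ gives $\|R(x) - R(y)\| \le \tfrac{L+1-t}{t}\|x-y\|$, which is at most $\|x-y\|$ precisely when $t \ge (L+1)/2$. For ii), with $s \ge t$, setting $R' := (A - (1-s)I_d)/s$ and substituting $A = tR + (1-t)I_d$ yields $R' = (t/s) R + ((s-t)/s) I_d$, a convex combination of non-expansive operators. For iii), in the linear case non-expansiveness of $R = (A - (1-t)I_d)/t$ is equivalent to $R^\tT R \preceq I_d$; multiplying out and clearing the factor $t^2$ produces exactly the stated PSD inequality. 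If $A$ is symmetric with eigenvalue $\lambda$, this inequality reduces to the scalar quadratic $\lambda^2 - 2(1-t)\lambda - (2t-1) \le 0$ whose roots are $1$ and $1-2t$; averagedness for some $t \in (0,1)$ is therefore equivalent to $\lambda \in (-1,1]$, and the $\tfrac12$-averaged case corresponds to $\lambda \in [0,1]$.

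For iv), I would first establish the reformulation that $A$ is $t$-averaged iff
\begin{equation}
\|Ax - Ay\|^2 \le \|x-y\|^2 - \tfrac{1-t}{t}\|(I_d - A)x - (I_d - A)y\|^2
\end{equation}
for all $x,y$, which follows by expanding the non-expansiveness condition for $R$. For the composition $A_2 \circ A_1$, one then chains this inequality once for $A_2$ (at arguments $A_1 x, A_1 y$) and once for $A_1$, uses the decomposition $I_d - A_2 A_1 = (I_d - A_1) + (I_d - A_2) A_1$, and combines it with the Cauchy--Schwarz-type bound $\|u + v\|^2 \le \|u\|^2/\alpha + \|v\|^2/\beta$ for $\alpha, \beta > 0$ with $\alpha + \beta = 1$. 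Matching the coefficients of the two displacement-like terms forces $\alpha = \tfrac{t_1(1-t)}{(1-t_1)t}$ and $\beta = \tfrac{t_2(1-t)}{(1-t_2)t}$, and the constraint $\alpha + \beta = 1$ gives exactly $\frac{t}{1-t} = \frac{t_1}{1-t_1} + \frac{t_2}{1-t_2}$. Induction over the number of factors yields the general formula, and overestimating each summand by the maximum produces $t \le K/((K-1) + 1/\max_k t_k)$. The main delicate point, which is essentially what is imported from [CY15], is the choice of $\alpha, \beta$ above; the rest is bookkeeping.

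For v), any fixed point $\bar x$ of $A$ also satisfies $R\bar x = \bar x$, since $\bar x = tR\bar x + (1-t)\bar x$. Applying the identity $\|\mu a + (1-\mu) b\|^2 = \mu \|a\|^2 + (1-\mu)\|b\|^2 - \mu(1-\mu)\|a-b\|^2$ with $\mu = t$, $a = R x^{(r)} - \bar x$, $b = x^{(r)} - \bar x$, using $\|R x^{(r)} - \bar x\| \le \|x^{(r)} - \bar x\|$, and noting $x^{(r+1)} - x^{(r)} = t(R x^{(r)} - x^{(r)})$ yields
\begin{equation}
\|x^{(r+1)} - \bar x\|^2 \le \|x^{(r)} - \bar x\|^2 - \tfrac{1-t}{t}\|x^{(r+1)} - x^{(r)}\|^2.
\end{equation}
Telescoping gives Fej\'er monotonicity with respect to every fixed point and $\sum_r \|x^{(r+1)} - x^{(r)}\|^2 < \infty$, so $\{x^{(r)}\}$ is bounded and the finite-dimensional Bolzano--Weierstrass theorem yields a subsequence $x^{(r_k)} \to x^\star$. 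Continuity of $A$ together with $\|x^{(r+1)} - x^{(r)}\| \to 0$ forces $Ax^\star = x^\star$, and applying Fej\'er monotonicity with this new fixed point upgrades subsequential convergence to convergence of the whole sequence.
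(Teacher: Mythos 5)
Your proposal is correct, and it fills in exactly the details the paper leaves implicit: the paper offers no proof of its own, stating that parts i)--iii) ``follow directly by definition'' (which is precisely your manipulation of $R=(A-(1-t)I_d)/t$) and deferring iv)--v) to \cite{CY15}, whose standard arguments -- the quantitative averagedness inequality combined with the convexity bound $\|u+v\|^2\le\|u\|^2/\alpha+\|v\|^2/\beta$ for the composition, and the Fej\'er-monotone Krasnoselskii--Mann argument for the iteration -- are the ones you reproduce. All computations (the roots $1$ and $1-2t$ in iii), the matching $\frac{t}{1-t}=\sum_k\frac{t_k}{1-t_k}$ in iv), and the telescoping estimate in v)) check out.
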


\paragraph{Stiefel manifold}
For $n \ge d$, the (compact) \emph{Stiefel manifold} is defined as 
$$
\St(d,n) \coloneqq \bigl\{ T \in \mathbb R ^{n,d} : T^\tT T = I_d\bigr\}.
$$
This manifold has dimension $nd -\frac12 d(d+1)$ and its
tangent space at $T \in \mathrm{St}(d,n)$ is given by 
$$
{\mathcal T}_T \mathrm{St}(d,n) =\bigl\{T V + B: V^\tT = -V, T^\tT B = 0\bigr\},
$$
see \cite{AMS08}.
For fixed $T \in \St(d,n)$, the \emph{orthogonal projection} of $X \in\R^{n,d}$ 
\emph{onto the tangent space} ${\mathcal T}_T \mathrm{St}(d,n)$ is given by
\begin{align} \label{proj_stiefel_1}
\Pi_{{\mathcal T}_T \mathrm{St}(d,n)} X 
&= (I_n - T T^\tT) X + \tfrac12 T(T^\tT X - X^\tT T)\\
&= W(T,X) \, T,
\end{align}
where
\begin{equation} \label{proj_stiefel_2}
W(T,X) \coloneqq \hat W (T,X) - \hat W^\tT (T,X), \quad \hat W(T,X)  \coloneqq X T^\tT-\tfrac12 T(T^\tT X T^\tT).
\end{equation}
Several retractions on $\St(d,n)$ based on the QR factorization of matrices  were proposed in the literature.
As the computation of the QR decomposition appears to be time consuming on a GPU, 
we prefer to apply the following retraction \cite{NA2005,WY2013} based on the Cayley transform of skew-symmetric matrices
\begin{equation} \label{retraction_2}
{\mathcal R}_{\mathcal T_T \St(d,n)}(X) = \bigl(I_n-\tfrac12 W(T,X) \bigr)^{-1}\bigl(I_n +\tfrac12 W(T,X)\bigr)T, \quad X \in {\mathcal T}_T\mathrm{St}(d,n).
\end{equation}
Note that the required matrix inversion can be efficiently evaluated by a fixed point iteration, see \cite{HHNPSS2019,LLT2020}.
Since
$$W(T,X) = W(T,\Pi_{\mathcal T_{T}\St(d,n)} X),$$ 
this retraction enlarged to the whole $\mathbb R^{n,d}$ fulfills
\begin{equation}\label{retraction-projection}
{\mathcal R}_{\mathcal T_T \St(d,n)} (X) = {\mathcal R}_{\mathcal T_T \St(d,n)} (\Pi_{\mathcal T_T \St(d,n)} X), 
\quad X \in \mathbb R^{n,d}.
\end{equation}

Finally, we need the orthogonal projection with respect to the Frobenius norm
onto the Stiefel manifold itself, see \cite[Sect.~7.3, 7.4]{HS2013}.
An orthogonal projection of $X \in \mathbb R^{n,d}$, $d \le n$, onto $\St(d,n)$ is 
given by the matrix $U$ in the polar decomposition 
\begin{equation} \label{polar}
X=U S, \quad U\in \St(d,n),\, S\in \R^{d,d} \; \mathrm{symmetric \; positive \; definite }.
\end{equation}
The projection is unique if and only if $X$ has full column rank.

\section{Proximal Neural Networks} \label{sec:PNN}
In this section, we consider proximal neural networks introduced by some of the authors in \cite{HHNPSS2019}.
Note that the original network structure is slightly more general than the one used in this paper.
Such networks are concatenations of building blocks of the form
\begin{equation}\label{building_block}
\Phi(\cdot\,; T,b,\alpha)  \coloneqq T^\tT \sigma_\alpha (T \cdot + b) , 
\end{equation} 
where $b \in \R^n$, the matrix $T$ or $T^\tT$ is in $\St(d,n)$ and $\sigma_\alpha$ is a \emph{stable activation function}, i.e., a function which satisfies $\sigma_\alpha(0) = 0$ and one of the
equivalent properties in Proposition~\ref{prop:stab}.
The activation function may depend on a parameter $\alpha >0$ or is 
parameter-free in which case we ignore the index $\alpha$.
Nearly all practically applied activation functions are stable, see Table \ref{prox:act} in the appendix.
For $T \in \R^{n,d}$, we use the abbreviation
$$
T' \coloneqq 
\left\{
\begin{array}{ll}
T &\mathrm{if} \; n \ge d,\\
T^\tT &\mathrm{if} \; n < d.
		\end{array}
		\right.
$$
Under the above assumptions, it can be shown using Moreau's characterization that the function $\Phi$ in \eqref{building_block} is indeed a proximity operator 
of some function from $\Gamma_0(\mathbb R^d)$, see \cite{Beck17,HHNPSS2019}.
In particular, $\Phi$ is $\frac12$-averaged.
A \emph{proximal neural network} (PNN) is defined as
\begin{align}\label{eq_PPNN}
\mathbf{\Phi} (x;u) = T_{K}^\tT \sigma_{\alpha_{K}}(T_{K}\cdots T_2^\tT \sigma_{\alpha_2}(T_2T_1^\tT \sigma_{\alpha_1}(T_1x+b_1)+b_2)\cdots),
\end{align}
with parameters 
$u = ( (T'_k)_{k=1}^{K},(b_k)_{k=1}^{K},(\alpha_k)_{k=1}^{K})$,
where 
$T'_k \in \St(d,n_{k})$, $b_k \in \R^{n_k}$ and $\alpha_{k}>0$, see \cite{HHNPSS2019}.
PNNs are the concatenation of $K$ firmly non-expansive operators. Hence, Theorem~\ref{alpha_lin}iii) implies that they are averaged with $\alpha = K/(K+1)$. 

\begin{remark}
The authors of
\cite{huang2018orthogonal} considered NNs of the form
$$
L_{K+1} \sigma_{\alpha_{K}}(L_{K}\cdots L_3 \sigma_{\alpha_2}(L_2 \sigma_{\alpha_1}(L_1x+b_1)+b_2)\cdots),
$$
where the matrices $L_k \in R^{n_k,n_{k-1}}$ or their transposed are in a Stiefel manifold and trained them by the so-called optimization over multiple dependent Stiefel manifolds (OMDSM).
It can be shown that for a given $T'_{k-1} \in \St(d,n_{k-1})$ there exists
$T'_{k} \in \St(d,n_{k})$ 
such that
$L_k = T_k T_{k-1}^\tT$, see \cite{HHNPSS2019}.
The converse is not true, i.e., $T_k T_{k-1}^\tT$ (or its transpose) is in general not in  a Stiefel manifold.
Thus,  PNNs  are more general than the NNs in \cite{huang2018orthogonal}.
\end{remark}

In the following, we want to learn the parameters $u=(T,b,\alpha)$ of PNNs, where
\begin{itemize}
\item[-]
$T=(T'_k)_{k=1}^{K} \in \mathcal{U}_2 \coloneqq \St(d,n_1) \times \ldots \times \St(d,n_{K})$,
\item[-]
$b = (b_k)_{k=1}^{K} \in \mathcal{U}_1 \coloneqq \R^{n_1} \times \ldots \times \R^{n_{K}}$,
\item[-]
$\alpha = (\alpha_k)_{k=1}^{K} \in \mathcal{U}_0 \coloneqq \R_{>0}^{K}$.
\end{itemize}
For learning the parameters, we have to solve for given $(x_i,y_i)_{i=1}^N$
the following minimization problem
\begin{align}\label{eq:opt_problem}
\argmin_{(T,b,\alpha) \in \mathcal{U}_2 \times \mathcal{U}_1 \times \mathcal{U}_0}
\frac1N\sum_{i=1}^N\ell\bigl(\mathbf{\Phi}(x_i;u),y_i\bigr)
&=
\argmin_{u} \left\{ H(u) + \iota_{\mathcal{U}_2} (T) + \iota_{\mathcal{U}_0} (\alpha) \right\},
\end{align} 
with a differentiable  loss function $\ell$ and
$H(u) \coloneqq \frac1N\sum_{i=1}^N \ell\bigl(\Phi(x_i;u),y_i\bigr)$.
Here, $\iota_C$ denotes the \emph{indicator function} of a set $C$ defined by 
$\iota_C(x) \coloneqq 0$ if $x \in C$ 
and $\iota_C(x) \coloneqq +\infty$ otherwise.

In \cite{HHNPSS2019}, this functional was minimized  by a stochastic gradient decent algorithm on the manifold $\mathcal{U}_2 \times \mathcal{U}_1 \times \mathcal{U}_0$.
One  gradient descent step for minimizing a function $f\colon \mathcal M \rightarrow \R$ 
on a matrix manifold $\mathcal M$ embedded in $\R^D$ can be basically performed as
follows:
Given the previous iteration $\bar x \in \mathcal M$, 
\begin{itemize}
\item[i)]  compute the Euclidean gradient $\nabla f(\bar x)$ of $f$ at $\bar x$,
\item[ii)]  perform the orthogonal projection $\Pi_{\mathcal T_{\bar x}\mathcal M}\colon \R^D \rightarrow \mathcal T_{\bar x}\mathcal M$ 
of $\nabla f(\bar x)$ onto the tangent space $\mathcal T_{\bar x}\mathcal M$ of $\mathcal M$ at $\bar x$,
$$\Pi_{\mathcal T_{\bar x}\mathcal M} \left(\nabla f(\bar x) \right) = \nabla_{\mathcal M}  f(\bar x)$$ 
to obtain the Riemannian gradient 
$\nabla_{\mathcal M} f(\bar x)$,
\item[iii)] perform a descent step in direction $\nabla_{\mathcal M}  f(\bar x)$ using a retraction
${\mathcal R}_{\mathcal T_{\bar x} \mathcal M}\colon \mathcal T_{\bar x}\mathcal M \rightarrow \mathcal M$ to obtain a new iterate
$$
x_{\mathrm{new}} = {\mathcal R}_{\mathcal T_{\bar x} \mathcal M} \left( - \tau \nabla_{\mathcal M}  f(\bar x) \right), \quad \tau > 0.
$$
\end{itemize}
Since  $\mathcal{U}_1$  is  an Euclidean space, we only have to consider orthogonal projections and retractions on $\St(d,n)$ and on the positive numbers.
On $\St(d,n)$ we use the retraction \eqref{retraction_2}, which has the additional advantage that step ii) in Algorithm~\ref{alg:SGD_Stiefel} is not necessary, i.e., we can apply the enlarged retraction \eqref{retraction-projection} to the Euclidean gradient directly.

For learning the positive parameters of the activation function,
we have to deal with the Riemannian manifold $\R_{>0}$ 
with tangent space $\mathcal T_\alpha\R_{>0}=\R$ at $\alpha >0$
and Riemannian metric 
$\langle r,s\rangle_\alpha=\tfrac{rs}{\alpha^2}$ with associated distance 
$\text{dist}(\alpha,\beta) = \vert \ln (\alpha / \beta) \vert$.
As retraction, we use the exponential map  given by 
$\mathrm{EXP}_\alpha(r) = \alpha \exp(r/\alpha)$.
Now, the Riemannian gradient $\nabla_{\R_{>0}} f(\alpha)$ on the manifold $\R_{>0}$ fulfills for all $r\in\R$ that
$$
\tfrac1{\alpha^2}\nabla_{\R_{>0}}f(\alpha) r
=
\langle \nabla_{\R_{>0}}f(\alpha),r\rangle_\alpha = r f'(\alpha),
$$
where $f'$ is the Euclidean derivative of $f$. 
Thus, $\nabla_{\R_{>0}}f(\alpha) = \alpha^2 f'(\alpha)$ and a gradient descent step reads as
$$
\alpha^{(r+1)}
= \mathrm{EXP}_{\alpha^{(r)}} \bigl(-\nabla_{\R_{>0}} f(\alpha^{(r)}) \bigr) 
= \alpha^{(r)} \exp \bigl(-\alpha^{(r)} \, f'(\alpha^{(r)} )\bigr).
$$

In summary, the stochastic gradient descent reprojection algorithm on $\mathcal{U}_2 \times \mathcal{U}_1 \times \mathcal{U}_0$
is given in  Algorithm~\ref{alg:SGD_Stiefel}.
So far, we have learned fully populated matrices $T'_k\in \mathrm{S}(d,n_{k})$, $k=1,\ldots,K$.
To cope with real-world applications, we have to generalize our concept to convolutional networks.

\begin{algorithm}[!t]
\begin{algorithmic}
\State \textbf{Input:} Training data $(x_i,y_i)_{i=1}^N$, batch size $B\in\N$, 
learning rate $(\tau^{(r)})_{r\in\N}$
\State \textbf{Initialization:}  $\mathcal M \coloneqq \St(n_K,d) \times \ldots \times \St(n_1,d)$, $(T^{(0)},b^{(0)}, \alpha^{(0)})$, 
where $T^{(0)}$ has components $(T')_k^{(0)}$, $k=1,\ldots,K$.

\For{$r=0,1,\ldots$}
    \State 1. Choose a mini batch $I\subset\{1,\ldots,N\}$ of size $|I|=B$
    \State 2. Compute the Euclidean gradients of 
		$ H_I(u) \coloneqq \sum_{i\in I} \ell(\mathbf{\Phi}(x_i;u);y_i)$ 
		\State using backpropagation
    $$
		\nabla_T      H_I\bigl(u^{(r)}\bigr), \quad
    \nabla_b      H_I\bigl(u^{(r)}\bigr), \quad
    \nabla_\alpha H_I\bigl(u^{(r)}\bigr)
    $$
    \State 3. Update $u$ by a gradient descent step using the retraction \eqref{retraction-projection}
    \begin{align}
    T^{(r+1)} &=  {\mathcal R}_{\mathcal T_{T^{(r)}} \mathcal M } 
		\left( -\tfrac{\tau^{(r)}}{B}\nabla_{T} 
		H_I 
		\bigl(T^{(r)},b^{(r)}
		\bigr) \right),
		\\
    b^{(r+1)} &= b^{(r)}-\tfrac{\tau^{(r)}}{B}\nabla_b  H_I
		\bigl(T^{(r)},b^{(r)}\bigr)	,\\
		\alpha^{(r+1)} &= \alpha^{(r)}\exp\left(-\alpha^{(r)} \nabla_{\alpha} H_I\bigl(\alpha^{(r)}\bigr)\right)
    \end{align}		
\EndFor
\end{algorithmic}
\caption{Stochastic gradient descent algorithm for minimizing \eqref{eq:opt_problem} 
} \label{alg:SGD_Stiefel}
\end{algorithm}

\section{Convolutional Proximal Neural Networks} \label{sec:cPNN}
In many real world applications, in particular in image processing,
it is not possible to learn full matrices $T_k$, $k=1,\ldots,K$.
Therefore, we address the construction of PNNs with convolutional layers, called cPNNs, in this section.
In Subsection \ref{sec:cPNN+matrix}, we investigate convolutions having full filter lengths.
It turns out that in this case we are just dealing with a submanifold of the Stiefel manifold
so that similarly as before a stochastic gradient descent algorithm on this submanifold can be used for training.
However, for high-dimensional data it is preferable to learn sparse filters.
Therefore, we resort to cPNNs with filters of prescribed lengths in Subsection~\ref{sec:cPNN+limit}.
Unfortunately, we leave the submanifold setting here
and have to apply a completely different approach for training.
 
\subsection{Convolutional PNNs with Full Filter Lengths and Matrix Algebras}\label{sec:cPNN+matrix}
There exist several approaches to deal with boundaries when applying a filter $a \in \mathbb R^m$ to some finite signal
$f \in \mathbb R^n$, i.e., when computing the vector
$f*a = ( (f*a)_j)_{j=0}^{m-1}$
given by
$$
(f*a)_j = \sum_{k=0}^{m-1} a_k f_{j-k}, \quad j=0,\ldots,n-1,
$$
with a suitable extension of $f$ for indices not between $0$ and $m-1$.
Common extension choices are  
\begin{itemize}
\item[-] periodic: $f_{j+lm} = f_j$, 
\item[-] mirrored: $f_{2m-j-1} = f_j$ and $f_{j+2lm} = f_j$,
\item[-] zero-padding: $f_{j+lm} = 0$,
\end{itemize}
where $j\in \{0,\ldots,m-1\}$ and $l \in \mathbb Z \setminus \{0\}$.
The first two cases are related to the matrix algebras of circulant 
and special Toeplitz-plus-Hankel matrices, 
while the last one relies on Toeplitz matrices.
More generally, for a unitary matrix $U_m\in\C^{m,m}$, 
we consider the commutative matrix algebra
\[                                                        
\mathcal{A}(U_m) =\bigl\{ C=U_m \Lambda U_m^* \in \R^{m,m}:~\Lambda \; \text{complex diagonal matrix}\bigr\} .
\]
The mentioned boundary cases are treated in the next example, for more information see, e.g., \cite{PS99,strang2014functions}.

\begin{example}
	\hspace{1em}
	\begin{enumerate}
		\item \textbf{Filtering of periodic signals:}\\
		For the $m$-th Fourier matrix 
		$U_m = F_{m} = \frac{1}{\sqrt{m}}(e^{-2\pi \mathrm{i} jk/m})_{j,k = 0}^{m-1,m-1}$, 
		we obtain the algebra of circulant matrices  with first column $a \in \R^m$,                                    
		\begin{equation} \label{circs}
		\mathcal{A}(F_m) = \bigl\{\Circ_m(a) \coloneqq  F_m^* \diag( \sqrt{m}  F_m a) F_m: a \in \R^{m} \bigr\}.
		\end{equation}
		Filtering with periodic boundary conditions can be written as
		$f*a = \Circ_m(a) f$. 
		
		\item \textbf{Filtering of signals with mirrored boundaries:}\\
		For the cosine-II matrix
		\[U_m = C_m^{\mathrm{II}}=\sqrt{\frac{2}{m}}\Bigl(\varepsilon_k\cos\frac{(2k+1)j\pi}{2m}\Bigr)_{j,k=0}^{m-1}\]
		with   $\varepsilon_0=1/\sqrt{2}$ and $\varepsilon_k=1$ for $k=1,\ldots,m-1$,
		we get the algebra of symmetric Toeplitz matrices with first column $a$ plus persymmetric Hankel matrices
		with first column $(a_1,\ldots,a_{m-1},0)^\tT$,
		$$
		\mathcal{A}(C_m^{\mathrm{II}}) = \bigl\{\mathrm{TH}_m(a) \coloneqq (C_m^{\mathrm{II}})^\tT \diag(C_m a) C_m^{\mathrm{II}}: a \in \R^{m}\bigr\},
		$$ 
		where $C_m \coloneqq 2(\varepsilon_k^2 \cos(jk\pi/m))_{j,k=0}^{m-1}$.
		Filtering with mirror boundary condition is given by $f*a = \mathrm{TH}_m(a) f$.
		
		\item Another common algebra in the context of convolutions is the so-called $\tau$-algebra based on the sin-I transform, 
		see \cite{bini1983spectral}.
	\end{enumerate}
\end{example}

The matrices appearing in NNs usually consist of several blocks of filter matrices.
Therefore, we consider the linear subspace of $\R^{n,d}$ given by block matrices of structured blocks
$$
\mathcal{V}(U_m) ^{m_1,m_2} = \bigl\{ C=(C_{ij})_{i,j=1}^{m_1,m_2}: 
C_{ij} \in \mathcal{A}(U_m) \bigr\},
$$
with 
$$n = m_1 m, \quad d = m_2 m.$$
Special block circulant matrices are addressed in  Proposition \ref{thm:toep_orth_yields_circ_orth}.
Now, we investigate the subset
	\begin{align}\label{setA}
	\mathcal{M} (U_m) ^{m_1,m_2}
	&\coloneqq
	 \mathcal{V}(U_m) ^{m_1,m_2} \cap \mathrm{St}(d,n).
	\end{align}
More precisely, we show that $\mathcal{M} (U_m) ^{m_1,m_2}$ is a submanifold of $\mathrm{St}(d,n)$
and that the orthogonal projections onto its tangent spaces as well as the retractions
coincide with those of the Stiefel manifold when restricted to $\mathcal{V}(U_m) ^{m_1,m_2}$.
Here, the algebra property of $\mathcal{A}(U_m)$ plays a crucial role.
As a consequence, the stochastic gradient descent Algorithm~\ref{alg:SGD_Stiefel} can be applied on $\mathcal{M} (U_m) ^{m_1,m_2} \times \mathcal U_1 \times \mathcal U_0$ as well.
We need the following well-known lemma, see \cite{AMS08}.
	
	\begin{lemma}\label{lem:01}
		Let $\mathcal V_1, \mathcal V_2$ be linear spaces of dimensions $d_1 > d_2$.
		For a smooth function $F \colon \mathcal V_1 \to \mathcal V_2$ with full rank Jacobian $DF(x)$ for all $x\in \mathcal V_1$, the set
		\[F^{-1} (0) = \bigl\{ x \in \mathcal V_1 : F(x) = 0\bigr\},\]
		is a smooth submanifold of $\mathcal V_1$ of dimension $d_1-d_2$ and for any $x \in F^{-1} (0)$ it holds
		\[\mathcal T_x F^{-1} (0) = \ker\bigl(DF(x)\bigr), \quad x \in F^{-1} (0).\]
	\end{lemma}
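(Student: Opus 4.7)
This is the regular value theorem (or submersion theorem); the natural route is via the implicit function theorem applied locally at each point of $F^{-1}(0)$, followed by a separate tangent space computation.

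First I would fix $x_0 \in F^{-1}(0)$ and use the full rank assumption $\rank DF(x_0) = d_2$ to decompose $\mathcal V_1 = \ker(DF(x_0)) \oplus W$, where $W$ is any $d_2$-dimensional complement. Writing $x = (u,w)$ with $u \in \ker(DF(x_0))$, $w \in W$, the restriction $DF(x_0)|_W : W \to \mathcal V_2$ is a linear isomorphism. The implicit function theorem then produces open neighbourhoods $U_0 \subset \ker(DF(x_0))$ of $0$ and $V_0 \subset W$ of $0$, together with a unique smooth map $g \colon U_0 \to V_0$ with $g(0) = 0$, such that within $U_0 \times V_0$ the equation $F(x_0 + u + w) = 0$ is equivalent to $w = g(u)$. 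The map $u \mapsto x_0 + u + g(u)$ is a smooth diffeomorphism from $U_0$ onto a relatively open subset of $F^{-1}(0)$, providing a chart. Varying $x_0$ over $F^{-1}(0)$ produces an atlas, so $F^{-1}(0)$ is a smooth submanifold of $\mathcal V_1$ of dimension $d_1 - d_2$.

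For the tangent space identity, I would argue both inclusions. For $\subseteq$: any $v \in \mathcal T_x F^{-1}(0)$ is realized as $v = \gamma'(0)$ for some smooth curve $\gamma \colon (-\varepsilon,\varepsilon) \to F^{-1}(0)$ with $\gamma(0) = x$; differentiating $F \circ \gamma \equiv 0$ gives $DF(x) v = 0$, hence $v \in \ker(DF(x))$. For $\supseteq$: given $v \in \ker(DF(x))$, use the chart above centred at $x$ to set $\gamma(t) \coloneqq x + tv + g(tv)$, which lies in $F^{-1}(0)$ and satisfies $\gamma(0) = x$, $\gamma'(0) = v + Dg(0) v = v$ (since $Dg(0) = 0$ because $DF(x)|_{\ker(DF(x))} = 0$ forces $v \in \ker(DF(x))$ to map to $0 \in W$). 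Thus $v \in \mathcal T_x F^{-1}(0)$. A dimension count $\dim \mathcal T_x F^{-1}(0) = d_1 - d_2 = \dim \ker(DF(x))$ also confirms the equality.

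The only real subtlety is verifying $Dg(0) = 0$ (or equivalently, that the chart derivative is the identity on $\ker(DF(x))$). This follows by differentiating the defining identity $F(x + u + g(u)) = 0$ at $u = 0$: one obtains $DF(x)(v + Dg(0) v) = 0$ for all $v \in \ker(DF(x))$, hence $DF(x) Dg(0) v = 0$, and since $Dg(0) v \in W$ where $DF(x)|_W$ is injective, we conclude $Dg(0) v = 0$. Everything else is a direct application of standard calculus in finite-dimensional linear spaces, so I expect no serious obstacle beyond bookkeeping of the splitting.
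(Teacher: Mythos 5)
Your proof is correct. Note that the paper itself gives no proof of this lemma: it is stated as a well-known fact with a citation to the Absil--Mahony--Sepulchre monograph, so there is nothing paper-internal to compare against. Your argument is the standard one that reference would supply --- the implicit function theorem applied after splitting $\mathcal V_1 = \ker(DF(x_0)) \oplus W$, yielding graph charts, followed by the two-inclusion tangent space computation. The one step that needed care, namely $Dg(0)v = 0$ for $v \in \ker(DF(x_0))$, you handle correctly in your final paragraph by differentiating $F(x_0 + u + g(u)) = 0$ and using injectivity of $DF(x_0)|_W$ (the earlier parenthetical phrasing of this point is garbled, but the later argument supersedes it). The only detail left implicit is that the transition maps between overlapping graph charts are smooth, which is routine since each chart and its inverse are compositions of smooth ambient maps with linear projections.
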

	
Based on this lemma, we can prove the following proposition.
		
	\begin{proposition}\label{prop:submanifold}
			The set $\mathcal M = \mathcal{M} (U_m) ^{m_1,m_2}$ defined in \eqref{setA} is a manifold and the tangential space at $C \in \mathcal M$ 
		is given by
		\begin{align*}
		{\mathcal T}_C \mathcal M 
		&=
		\bigl\{C V + B: V \in \mathcal{V}(U_m)^{m_2,m_2}, 
		B \in\mathcal{V}(U_m)^{m_1,m_2} \text{ with } V^\tT = -V, \, C^\tT B = 0\bigr\}\\
		&=
		\mathcal{V}(U_m) ^{m_1,m_2} \cap T_C \mathrm{St}(d,n).
		\end{align*}
	The orthogonal projection of $X \in \mathcal{V}(U_m)^{m_1,m_2}$ onto the tangent space 
		${\mathcal T}_C \mathcal M $ is given by 
		$$ \Pi_{{\mathcal T}_C \mathcal M} (X) = P_{{\mathcal T}_C \mathrm{St}(d,n)} (X),$$
		and a retraction by	
		$$
		{\mathcal R}_{{\mathcal T}_C \mathcal M} (X) = {\mathcal R}_{{\mathcal T}_C \mathrm{St}(d,n)} (X),
		$$
		where ${\mathcal R}_{{\mathcal T}_C \mathrm{St}(d,n)}$ denotes the retraction in \eqref{retraction_2}.
		Further, we have	for the retraction enlarged to 	$\mathcal{V}(U_m)^{m_1,m_2}$ that
	\begin{equation}
	{\mathcal R}_{{\mathcal T}_C \mathcal M} (X) 
	= {\mathcal R}_{{\mathcal T}_C \mathrm{St}(d,n)} (\Pi_{{\mathcal T}_C \mathcal M} X), \qquad X \in \mathcal{V}(U_m)^{m_1,m_2}.
	\end{equation}
	 	\end{proposition}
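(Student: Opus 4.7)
The plan is to realize $\mathcal M$ as the regular zero set of a smooth map between linear spaces and then invoke Lemma~\ref{lem:01}. I set $\mathcal W \coloneqq \{S \in \mathcal V(U_m)^{m_2,m_2} : S^\tT = S\}$ and define
$$F \colon \mathcal V(U_m)^{m_1,m_2} \to \mathcal W, \qquad F(C) \coloneqq C^\tT C - I_d.$$
The key structural fact needed throughout the proof is that $\mathcal A(U_m)$ is a commutative matrix algebra containing $0$ and $I_m$; this passes to the block setting, so block products such as $C^\tT C$, $C^\tT X$, $C C^\tT$, $C V$, as well as $I_n$, all lie in the appropriate $\mathcal V(U_m)^{p,q}$. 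In particular, $F$ is well-defined. The Euclidean derivative is $DF(C)[H] = H^\tT C + C^\tT H$, and for $C \in \mathcal M$ and $S \in \mathcal W$ the choice $H \coloneqq \tfrac12 C S$ lies in $\mathcal V(U_m)^{m_1,m_2}$ (by the algebra property) and satisfies $DF(C)[H] = S$. Hence $DF(C)$ has full rank on $\mathcal M = F^{-1}(0)$, and Lemma~\ref{lem:01} yields that $\mathcal M$ is a smooth submanifold of $\mathcal V(U_m)^{m_1,m_2}$ with tangent space $\ker DF(C)$.

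This kernel description gives $\mathcal T_C \mathcal M = \mathcal V(U_m)^{m_1,m_2} \cap \mathcal T_C \St(d,n)$ directly. For the first description in the proposition, I take $H \in \mathcal T_C \mathcal M$ and set $V \coloneqq C^\tT H$ and $B \coloneqq H - C V$; the algebra property shows $V \in \mathcal V(U_m)^{m_2,m_2}$ and $B \in \mathcal V(U_m)^{m_1,m_2}$, while $V^\tT = -V$ and $C^\tT B = 0$ drop out of $H^\tT C + C^\tT H = 0$ together with $C^\tT C = I_d$.

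For the projection statement, the closed-form Stiefel projection \eqref{proj_stiefel_1} maps $\mathcal V(U_m)^{m_1,m_2}$ into itself by algebraic closure, and of course into $\mathcal T_C \St(d,n)$, hence into $\mathcal T_C \mathcal M$. Since the Frobenius inner product on $\R^{n,d}$ restricts to the ambient inner product on $\mathcal V(U_m)^{m_1,m_2}$ and the Stiefel residual is orthogonal to all of $\mathcal T_C \St(d,n) \supset \mathcal T_C \mathcal M$, this establishes $\Pi_{\mathcal T_C \mathcal M}(X) = \Pi_{\mathcal T_C \St(d,n)}(X)$ for $X \in \mathcal V(U_m)^{m_1,m_2}$.

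The retraction part is where the main obstacle lies. Using \eqref{proj_stiefel_2} and the algebra property I argue block by block that $W(C,X) \in \mathcal V(U_m)^{m_1,m_1}$, so that $I_n - \tfrac12 W(C,X)$ also lies there. The delicate point is to show that the Cayley inverse $(I_n - \tfrac12 W(C,X))^{-1}$ remains in $\mathcal V(U_m)^{m_1,m_1}$. For this I would note that $\mathcal V(U_m)^{m_1,m_1}$ is a subalgebra of $\R^{n,n}$ which, after unitary conjugation by $I_{m_1} \otimes U_m$ followed by a perfect-shuffle permutation, is block-diagonalized into $m$ copies of $\C^{m_1,m_1}$; subalgebras of this form are closed under inversion of invertible elements. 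Invertibility itself follows from skew-symmetry of $W$, which forces the eigenvalues of $\tfrac12 W$ to be purely imaginary. Therefore $\mathcal R_{\mathcal T_C \St(d,n)}(X)$ belongs to $\mathcal V(U_m)^{m_1,m_2} \cap \St(d,n) = \mathcal M$ for every $X \in \mathcal T_C \mathcal M$, and the defining retraction axioms $\mathcal R_C(0) = C$ and $D\mathcal R_C(0) = \mathrm{id}$ are inherited from the Stiefel retraction. Finally, the enlarged identity follows by combining \eqref{retraction-projection} for the Stiefel retraction with the projection equality just established.
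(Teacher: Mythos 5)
Your proposal is correct and follows essentially the same route as the paper: Lemma~\ref{lem:01} applied to $F(C)=C^\tT C-I_d$ on $\mathcal V(U_m)^{m_1,m_2}$ with target the symmetric part of $\mathcal V(U_m)^{m_2,m_2}$, the surjectivity witness $\tfrac12 CS$, the decomposition $V=C^\tT H$, $B=H-CV$ for the tangent space, and closure of the Stiefel projection and Cayley retraction under the block-algebra structure. Your extra care about why the Cayley inverse stays in $\mathcal V(U_m)^{m_1,m_1}$ (via block-diagonalization, or equivalently Cayley--Hamilton in a unital subalgebra) fills in a step the paper dismisses as immediate, but it is the same argument.
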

	
	\begin{proof}
	1. In order to apply Lemma \ref{lem:01}, we consider $\mathcal V_1 = \mathcal{V}(U_m)^{m_1,m_2}$ and the space of symmetric matrices $\mathcal V_2 = \sym(\mathcal{V}(U_m)^{m_2,m_2}) \subset \mathcal{V}(U_m)^{m_2,m_2}$.
	As mapping we choose $F(C) = C^\tT C - I_{d}$.
	Note that due to the algebra property of $\mathcal A(U_m)$ the matrix $C^\tT C$ is indeed in $\mathcal{V}(U_m)^{m_2,m_2}$.
	
	Then, it holds $\mathcal M=F^{-1}(0)$.
	The differential $DF(C)\colon \mathcal T_C \mathcal V_1\to \mathcal T_{F(C)} \mathcal V_2$ of this mapping at $C \in \mathcal{V}(U_m)^{m_1,m_2}$ applied to $\Xi \in \mathcal T_C \mathcal V_1 = \mathcal{V}(U_m)^{m_1,m_2}$ is given by
	\begin{equation}\label{++}
	DF(C)[\Xi] = C^\tT \Xi + \Xi^\tT C.
	\end{equation}
	For every $C\in \mathcal M$ and  $V \in \mathcal V_2$, we obtain
	\[ 
	DF(C)[\tfrac12 C V] = \tfrac12 C^\tT C V + \tfrac12 V^\tT C^\tT C = V,
	\]
	resulting in $\text{range}(DF(C)) = \mathcal V_2$, i.e., that $DF(C)$ has full rank.
	Hence, Lemma \ref{lem:01} implies that $\mathcal M$ is a manifold and  $\mathcal T_C \mathcal M=\ker (DF(C))$. 
	We show that $\ker (DF(C)) = \mathcal S$, where
	$$\mathcal S \coloneqq \bigl\{C V + B: V \in \mathcal{V}(U_m)^{m_2,m_2}, B \in \mathcal{V}(U_m)^{m_1,m_2} \text{ with } V^\tT = -V, \, C^\tT B = 0\bigr\}.$$ 
	It follows directly from
	\begin{align*}
		C^\tT(CV + B) + (V^\tT C^\tT + B^\tT)C = V + V^\tT = 0
	\end{align*}
	that $\mathcal S \subseteq \ker(DF(C))$.
	To prove the opposite inclusion, assume that  $\Xi\in\ker(DF(C))$. Let $V\coloneqq C^\tT\Xi$ and $B\coloneqq \Xi-CC^\tT\Xi$.
	Then, we have $\Xi= CV+B$, where 
	\begin{align}
		&V+V^\tT =  C^\tT\Xi + \Xi^\tT C =0 \quad\text{and}\quad C^\tT B = C^\tT \Xi - C^\tT CC^\tT\Xi=0,
	\end{align}
	i.e., $\Xi \in \mathcal S$.
	
	Clearly, $\mathcal{S} \subseteq \mathcal{V}(U_m)^{m_1,m_2} \cap T_C \mathrm{St}(d,n)$.
	In order to verify the other inclusion, 
	consider $X  \in \mathcal{V}(U_m)^{m_1,m_2} \cap T_C \mathrm{St}(d,n)$, 
	where $C \in \mathcal M$.
	Then, there exist $V$ and $B$ such that $V^\tT = -V, \, C^\tT B = 0$ and
	$X = C V + B$.
	Multiplying the last equation by $C^\tT$ yields
	$C^\tT X = V$. 
	Due to the algebra structure of $\mathcal{A}(U_m)$, we obtain that $V \in \mathcal{V}(U_m)^{m_2,m_2}$ 
	and further $B \in \mathcal{V}(U_m)^{m_1,m_2}$.
		
	2. To show that the  projection and retraction are determined by those of the Stiefel manifold,
	it remains to show that the later ones map $\mathcal{V}(U_m) ^{m_1,m_2}$ into itself.
	Fortunately, this follows immediately from the definitions of the projection 
	in \eqref{proj_stiefel_1} and the retraction in \eqref{retraction_2} 
	and the algebra property of $\mathcal{A}(U_m)$.
	\end{proof}
	
	By Proposition \ref{prop:submanifold}, cPNNs with full filters can be trained using the same stochastic gradient descent Algorithm~\ref{alg:SGD_Stiefel} as for usual PNNs -- we solely need to stay within $\mathcal{V}(U_m) ^{m_1,m_2}$, i.e., minimize over the filters.
	Finally, we have the following proposition concerning orthogonal projections 
	with respect to the Frobenius norm
	onto our submanifold.
	
\begin{proposition}\label{prop:orth}
Let $X \in \mathcal{V}(U_m)^{m_1,m_2}$ have  full column rank. 
The orthogonal projection of a matrix $X \in \mathcal{V}(U_m)^{m_1,m_2}$ with full column rank
onto $\mathcal{V}(U_m)^{m_1,m_2} \cap \St(d,n)$ is uniquely determined by the matrix $U$ 
in the polar decomposition $X=U S$.
\end{proposition}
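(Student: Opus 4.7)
The plan is to exploit the polar decomposition viewpoint on the Stiefel projection together with the algebra structure of $\mathcal{A}(U_m)$. By \eqref{polar}, since $X$ has full column rank, the factor $U$ in $X = U S$ is the unique orthogonal projection of $X$ onto $\St(d,n)$ in Frobenius norm. Because $\mathcal{V}(U_m)^{m_1,m_2} \cap \St(d,n)$ is a subset of $\St(d,n)$, once I show that this specific $U$ already lies in $\mathcal{V}(U_m)^{m_1,m_2}$, it will follow automatically that $U$ is the unique minimizer of the Frobenius distance over the smaller set as well: any competitor on the subset is in particular a competitor on the full Stiefel manifold, where $U$ is already the unique best approximation. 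Thus no separate uniqueness argument on the subset is needed.

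The central step is therefore to verify that $U \in \mathcal{V}(U_m)^{m_1,m_2}$. My plan is to write $U = X S^{-1}$ and track stability under the algebra structure. First, because $\mathcal{A}(U_m)$ is closed under products (both factors share the eigenbasis $U_m$) and blockwise matrix multiplication only involves sums of such products, the matrix $X^\tT X$ lies in $\mathcal{V}(U_m)^{m_2,m_2}$. Next, $S = (X^\tT X)^{1/2}$ is the unique symmetric positive definite square root of $X^\tT X$ and can be represented as a polynomial $p(X^\tT X)$, for example by polynomial interpolation of $\sqrt{\cdot}$ on the (positive) spectrum of $X^\tT X$. Since $\mathcal{V}(U_m)^{m_2,m_2}$ is closed under addition and multiplication and contains $I_d$, every polynomial in one of its elements is again in $\mathcal{V}(U_m)^{m_2,m_2}$, which gives $S \in \mathcal{V}(U_m)^{m_2,m_2}$.

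Since $X$ has full column rank, $S$ is invertible, and by the Cayley--Hamilton theorem $S^{-1}$ is again a polynomial in $S$, hence also in $\mathcal{V}(U_m)^{m_2,m_2}$. Multiplying $X \in \mathcal{V}(U_m)^{m_1,m_2}$ on the right by $S^{-1} \in \mathcal{V}(U_m)^{m_2,m_2}$ yields $U = X S^{-1} \in \mathcal{V}(U_m)^{m_1,m_2}$ by one more application of the blockwise closure. Combined with $U \in \St(d,n)$, this places $U$ in $\mathcal{V}(U_m)^{m_1,m_2} \cap \St(d,n)$, closing the loop with the first paragraph.

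The main obstacle I anticipate is the square-root step: one has to argue that the SPD square root of an element of $\mathcal{V}(U_m)^{m_2,m_2}$ stays in $\mathcal{V}(U_m)^{m_2,m_2}$. The polynomial representation sidesteps any delicate simultaneous block-diagonalization via $I_{m_2} \otimes U_m$ and keeps the argument short, which is why I favor it over an explicit diagonalization approach across the commuting structure.
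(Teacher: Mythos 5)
Your proposal is correct and follows the same overall strategy as the paper: both reduce the statement to showing that the polar factor $U = X S^{-1}$ stays in $\mathcal{V}(U_m)^{m_1,m_2}$, using that $S^2 = X^\tT X$ lies in $\mathcal{V}(U_m)^{m_2,m_2}$ by the algebra property and that right-multiplication by an element of $\mathcal{V}(U_m)^{m_2,m_2}$ preserves the block structure. The difference is in how the closure under $M \mapsto M^{-1/2}$ is verified. The paper simultaneously block-diagonalizes $S^2$ via a permutation conjugation into $\blkdiag(A_1,\ldots,A_m)$ with SPD blocks $A_i$ and writes $S^{-1}$ explicitly through the $A_i^{-1/2}$, which makes the structure of $S^{-1}$ completely concrete. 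You instead represent $S = p(X^\tT X)$ as an interpolating polynomial of $\sqrt{\cdot}$ on the spectrum and invoke Cayley--Hamilton for $S^{-1}$, using only that $\mathcal{V}(U_m)^{m_2,m_2}$ is a unital algebra; this is shorter and avoids the explicit diagonalization, at the cost of being less explicit (the paper's form of $S^{-1}$ is also what one would implement). A further small plus of your write-up is that you spell out why uniqueness of the projection onto the subset $\mathcal{V}(U_m)^{m_1,m_2} \cap \St(d,n)$ follows from uniqueness on all of $\St(d,n)$ once $U$ is shown to lie in the subset, a step the paper leaves implicit. One point to make explicit in a final version: $X^\tT X \in \mathcal{V}(U_m)^{m_2,m_2}$ uses that $\mathcal{A}(U_m)$ is closed under transposition, which holds because its elements are real and hence $C^\tT = C^* = U_m \overline{\Lambda}\, U_m^*$.
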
                               

\begin{proof}
As $X$ has full rank, a unique polar decomposition of $X$ exists.
In view of \eqref{polar}, it suffices to show that $X \in \mathcal{V}(U_m)^{m_1,m_2}$ implies $U \in \mathcal{V}(U_m)^{m_1,m_2}$. 
To this end, we first decompose the matrix $S$.
Due to the algebra structure, we get $S^2 = X^* X \in \mathcal{V}(U_m)^{m_2,m_2}$.
The special structure of $\mathcal{V}(U_m)^{m_2,m_2}$ allows us to find a permutation matrix $P$ such that
\[S^2 = \blkdiag(\underbrace{U_m,\ldots,U_m}_\text{$m_2$ blocks}) P \blkdiag(A_1,\ldots,A_m) P^\tT \blkdiag(\underbrace{U_m^*,\ldots,U_m^*}_\text{$m_2$ blocks}),\]
where all $A_i \in \R^{m_2,m_2}$ are symmetric positive definite and $\blkdiag(A_1,\ldots,A_m)$ denotes the block diagonal matrix with blocks $A_1, \ldots, A_m$.
Clearly, the invertibility of $S^2$ implies that all $A_i$ are invertible.
Therefore, $S^{-1}$ can be written as
\[S^{-1} = \blkdiag(U_m,\ldots,U_m) P \blkdiag\bigl(A_1^{-1/2},\ldots,A_m^{-1/2}\bigr) P^\tT \blkdiag(U_m^*,\ldots U_m^*) \in \mathcal{V}(U_m)^{m_2,m_2}.\]
Hence, we obtain $U = X S^{-1} \in \mathcal{V}(U_m)^{m_2,m_2}$, which concludes the proof.
\end{proof}

\begin{example}
	From the previous lemma, we can directly conclude that the (not necessarily unique) 
	orthogonal projection of any matrix $\Circ_m(a)$, $a \in \mathbb R^m$, onto 
	$\mathcal A (F_m) \cap \St(m,m)$
	is explicitly given by $\Circ_m (a^*)$, where
	$$
	a^* =  F_m^* \tilde a , \quad 
	\tilde a_j = 
	\tfrac{\hat a_j}{|\hat a_j|}, \quad  
	\mathrm{if} \quad |\hat a_j| \not = 0, \quad \hat a \coloneqq \sqrt{m} F_m a
	$$
	and 
	$\tilde a_j = \overline{\tilde a}_{m-j} = \mathrm{e}^{2\pi i \phi_j}$  for any $\phi \in [0,1)$
	if $|\hat a_j| = |\hat a_{m-j}| = 0$.
\end{example}

\begin{remark}
	In case that $m_1 = m_2$, we can use the iteration
	\begin{equation}
		W_0 = X \quad \text{and} \quad W_{k+1} = \tfrac12 \bigl( W_k + (W_k^*)^{-1} \bigr)
	\end{equation}
	for computing $W$, see \cite{BX08,High86}.
	Note that these iteration preserves the algebra structure.
	Quadratic convergence of the iterates was proven for non singular matrices $X$.
	This could be used for an alternative proof of Proposition~\ref{prop:orth} for quadratic matrices.
	
	A more detailed review on polar decompositions can be found in \cite[Chap.~8]{Higham08}.
	In particular, for rectangular matrices the Newton--Schulz iteration
	\begin{equation}
		W_0 = X \quad \text{and} \quad W_{k+1} = \tfrac12 W_{k}(3I + W_k^* W_k) 
	\end{equation}
	was proposed, which convergences if the singular values of $X$ fulfill $0< \sigma_i<\sqrt{3}$.
\end{remark}

\subsection{Convolutional PNNs with Limited Filter Lengths}\label{sec:cPNN+limit}
Unfortunately, the approach from the previous section is not applicable 
to matrices arising from filters of small length $l < m$, since these
matrices do not form an algebra. 
In this section, we propose another approach for learning cPNNs
with filters of limited lengths.
First, we provide a justification for restricting our attention to circulant matrices.
More precisely, we show that for every  matrix with $m \times m$ Toeplitz blocks of filter lengths $l \le (m-4)/5$ 
lying in a Stiefel manifold, the corresponding circulant matrix is in the Stiefel manifold as well.

\begin{proposition}\label{thm:toep_orth_yields_circ_orth}
For $j=1,\ldots,m_1$, $k=1,\ldots,m_2$, $m_1 \geq m_2$ and  filter length $l \in \N$ with $l \le (m-4)/5$,
let 
$a^{(j,k)} = (a^{(j,k)}_0, a^{(j,k)}_{-1}, \ldots,a^{(j,k)}_{-l},0,\ldots,0,a^{(j,k)}_l,\ldots,a^{(j,k)}_1)^\tT$.
We consider block matrices with circulant blocks
\begin{equation}\label{blockcirc}
C=\left(\begin{array}{ccc}
\Circ_m(a^{(1,1)})&\cdots&\Circ_m(a^{(1,m_2)})\\
\vdots&&\vdots\\
\Circ_m(a^{(m_1,1)})&\cdots&\Circ_m(a^{(m_1,m_2)})
\end{array}\right)
\end{equation}
and block matrices with Toeplitz blocks
$$
T=\left(\begin{array}{ccc}
\Toep_m(a^{(1,1)})&\cdots&\Toep_m(a^{(1,m_2)})\\
\vdots&&\vdots\\
\Toep_m(a^{(m_1,1)})&\cdots&\Toep_m(a^{(m_1,m_2)})
\end{array}\right),
$$
where 
$\Toep_m(a^{(j,k)})$ denotes the $m\times m$ Toeplitz matrix with 
first row and column
$$\bigl(a_0^{(j,k)},a_1^{(j,k)},\cdots,a_l^{(j,k)},0,\cdots,0\bigr) 
\quad \mathrm{and} \quad
\bigl(a_0^{(j,k)},a_{-1}^{(j,k)},\cdots,a_{-l}^{(j,k)},0,\cdots,0\bigr)^\tT,
$$
respectively.
Then $T \in \St(d,n)$ with $d=m_2 m$, $n = m_1m$  implies that $C \in \St(d,n)$.
\end{proposition}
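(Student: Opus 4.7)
The plan is to exploit the fact that for any filter $a$ with support in $\{-l,\ldots,l\}$, the Toeplitz matrix $\Toep_m(a)$ and the circulant $\Circ_m(a)$ differ only on the top-right rectangle $\{0,\ldots,l-1\}\times\{m-l,\ldots,m-1\}$ and the bottom-left rectangle $\{m-l,\ldots,m-1\}\times\{0,\ldots,l-1\}$. Consequently, on any vector $x\in\R^m$ whose support is contained in the ``middle'' index set $\{l,l+1,\ldots,m-1-l\}$ one has $\Toep_m(a)\,x = \Circ_m(a)\,x$, since the entries of $x$ that the corner difference could touch all vanish. I will call such vectors middle-supported.

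I would then lift this observation to the block level. If $x=(x^{(1)},\ldots,x^{(m_2)})\in\R^d$ with each $x^{(k)}$ middle-supported, then $Tx=Cx$ blockwise. Combined with the hypothesis $T^\tT T = I_d$, this yields $\|Cx\|^2 = \|Tx\|^2 = \|x\|^2$, and polarization gives $y^\tT C^\tT C\,x = y^\tT x$ for every pair of middle-supported $x,y$. Specializing to $x,y$ with only one nonzero block, I would conclude that for every pair $(j,k)$ the central $(m-2l)\times(m-2l)$ submatrix (rows and columns indexed by $\{l,\ldots,m-1-l\}$) of
$$M_{j,k} \coloneqq (C^\tT C)_{j,k} - \delta_{j,k}\,I_m$$
vanishes.

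Next I would exploit the circulant algebra structure. Since circulants are closed under product and transpose, each block $(C^\tT C)_{j,k} = \sum_i \Circ_m(a^{(i,j)})^\tT \Circ_m(a^{(i,k)})$ is itself circulant, and so is $M_{j,k}$. Every diagonal of $M_{j,k}$ is therefore constant. Moreover, because each constituent filter has support in $\{-l,\ldots,l\}$, the resulting correlation filter has support in $\{-2l,\ldots,2l\}$, so the only potentially nonzero diagonals of $M_{j,k}$ are those indexed by $t \in \{0,1,\ldots,2l\}\cup\{m-2l,\ldots,m-1\}$. Thus to force $M_{j,k}=0$ it is enough to exhibit, for each such $t$, one index pair $(r,s)\in\{l,\ldots,m-1-l\}^2$ with $r-s\equiv t\pmod{m}$, since that single entry is already known to vanish from the central submatrix.

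The main (and essentially only) obstacle is this combinatorial reachability step. For $t \in \{0,\ldots,2l\}$ I would take $s=l$, $r=l+t$; for $t \in \{m-2l,\ldots,m-1\}$ I would take $r=l$, $s=l+(m-t)$. Both choices lie in $\{l,\ldots,m-1-l\}$ provided $m\ge 4l+1$, which is comfortably implied by the hypothesis $l\le(m-4)/5$ (i.e., $m\ge 5l+4$). Hence every potentially nonzero diagonal of $M_{j,k}$ vanishes, giving $(C^\tT C)_{j,k}=\delta_{j,k}\,I_m$ for all $j,k$, so $C^\tT C = I_d$ and $C\in\St(d,n)$.
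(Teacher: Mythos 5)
Your proof is correct and follows essentially the same route as the paper's: both arguments rest on the observation that $T$ and $C$ agree on the $m-2l$ middle columns of each block column, combined with the circulant shift-invariance of the Gram entries and a reachability argument that only needs $m\ge 4l+1$. The paper phrases this as a case analysis on pairs of column indices shifted modulo $m$ into the middle range, whereas you package the same facts as ``the blocks of $C^\tT C - I_d$ are circulant with correlation filters supported in $\{-2l,\ldots,2l\}$, and every such diagonal meets the central $(m-2l)\times(m-2l)$ submatrix''; the content is identical.
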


\begin{proof}
Let $T_i$ and $C_i$, $i=1,\ldots,d$, denote the columns of $T$ and $C$, respectively. 
Since $T\in\St(d,n)$, we have 
$$
\langle T_{i_1},T_{i_2}\rangle
=
\begin{cases}
1,&\mathrm{if} \; i_{1}=i_{2},\\
0,&\mathrm{otherwise}.
\end{cases}
$$
Further, the special structure of circulant and Toeplitz matrices 
implies $T_{i}=C_{i}$ for $i \in I \coloneqq \{jm+k: j=0,\ldots,m_2-1, k=l+1,\ldots,m-l\}$. 
Consequently, we obtain for all $i_1,i_2 \in I$  that 
\begin{align}
\langle C_{i_1},C_{i_2}\rangle
=
\begin{cases}
1,&\mathrm{if} \;  i_1=i_2,\\
0,&\mathrm{otherwise}.
\end{cases}\label{eq:orth1}
\end{align}
As $C$ has circulant blocks, we conclude for $k_1,k_1',k_2,k_2'\in\{1,\ldots,m\}$ that 
\begin{align}
\langle C_{j_1 m+k_1},C_{j_2 m+k_2}\rangle
=
\langle C_{j_1 m+k_1'},C_{j_2 m+k_2'}\rangle
\quad \text{if}\quad 
k_1-k_1'=k_2-k_2'\,\, \mathrm{mod}\,\, m.\label{eq:circ_inner_shift}
\end{align}
Now, let $i_1=j_1m+k_1$ and $i_2=j_2m+k_2$. 
We want to show that the inner product of $C_{i_1}$ and $C_{i_2}$ 
is equal to $1$ if $i_1=i_2$ and $0$ otherwise. 

If $k_1,k_2\in\{l+1,\ldots,m-l\}$, then the claim follows by \eqref{eq:orth1}. 
Hence, it remains to consider the other cases.
Assume that $k_1\in\{1,\ldots,l\}$. 
If $k_2\in\{1,\ldots,m-2l-2\}$, then 
we get by \eqref{eq:orth1}, \eqref{eq:circ_inner_shift}, $k_1+l+1 \leq 2l+1 < m-l$ and $k_2+l+1<m-l$ that
\begin{align}
\langle C_{i_1},C_{i_2}\rangle
=
\langle C_{i_1+l+1},C_{i_2+l+1}\rangle
=
\begin{cases}
1&\mathrm{if} \; i_1=i_2,\\
0&\mathrm{otherwise}.
\end{cases}
\end{align}
For $k_2\in\{m-2l-1,\ldots,m\}$, we see that $k_2-2l-1>l+1$ and $l+1<k_1+m-2l-1<m-l$. 
Consequently, we obtain by \eqref{eq:orth1} and \eqref{eq:circ_inner_shift} that
\begin{align}
\langle C_{i_1},C_{i_2}\rangle
=
\langle C_{i_1+m-2l-1},C_{i_2-2l-1}\rangle
=
\begin{cases}
1 & \mathrm{if} \; i_1=i_2,\\
0 & \mathrm{otherwise}.
\end{cases}
\end{align}
Note that the other cases work out analogously, which completes the proof.
\end{proof}

The following example shows that the reverse direction of Proposition~\ref{thm:toep_orth_yields_circ_orth} is not true.

\begin{example} Let $C= \Circ_m(a)$ with $a=(0,1,0\ldots,0)^\tT$, i.e., $C$ is a permutation matrix and therefore orthogonal. 
The corresponding Toeplitz matrix $T=\Toep_m(a)$ has first row zero and is consequently not orthogonal. 
\end{example}

Thus, the set of convolution filters (with length $l$) 
corresponding to a matrix in the Stiefel manifold arising from zero-padding is strictly included in the set of convolution filters corresponding to a matrix in the Stiefel manifold arising from periodic boundary conditions.
Analogous arguments can be applied to show similar results for circular matrices and other boundary conditions, e.g., mirror boundary.
Therefore, we restrict our attention to block matrices with circulant blocks obtained from filters of length
$2l+1$, i.e., matrices of the form \eqref{blockcirc}.
We denote such matrices by $\mathrm{bCirc}(l,m,m_1,m_2)$.
To learn the corresponding cPNN, we have to minimize over
\begin{equation}\label{challenge}
\mathcal{U}_2^l \coloneqq 
\mathrm{bCirc}\left(l,m,\tfrac{n_1}{m}, \tfrac{d}{m}\right)  
\times \ldots \times 
\mathrm{bCirc}\left(l,m,\tfrac{n_K}{m}, \tfrac{d}{m} \right) \cap \mathcal U_2.
\end{equation}
Unfortunately, $\mathcal{U}^l_2$ is no longer a submanifold of $\St(d,n)$.
Concerning the structure of block circulant matrices belonging to Stiefel manifolds, we have the following proposition.

\begin{proposition}\label{1d}
A block circulant matrix $\mathrm{bCirc}(l,m,m_1,m_2)$ as in \eqref{blockcirc} with $m_1 \ge m_2$ and filter length fulfilling $m \ge 4l+1$ belongs to $\St(d,n)$ with $d=m_2 m$, $n = m_1m$ if and only if the filters satisfy
$$
\sum_{t=1}^{m_1} \sum_{k=u-l}^{l} 
a_k^{(t,s_1)}a_{k-u}^{(t,s_2)} = \delta_{s_1,s_2} \delta_{u,0}, \quad u=0,\ldots,2l,\,s_1,s_2 = 1,\ldots,m_2.
$$
In particular, for $m_1 = m_2 = 1$, the filter $a^{(1,1)}$ is, up to the sign, a unit vector and the orthogonal projection of $\Circ_m(a)$ onto $\mathrm{bCirc}(l,m,1,1) \cap \St(m,m)$ is given by $\Circ_m(a^*)$, where 
$$
a^*=\mathrm{sgn}(a_j)e_j \quad\text{with}\quad j\in\argmax_{i=-l,\ldots,l} a_i.
$$
\end{proposition}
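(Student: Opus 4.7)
The plan is to translate the matrix equation $C^\tT C = I_d$ into scalar conditions on the filter coefficients, exploiting the fact that circulant matrices form a commutative algebra closed under transposition. Since each block of $C$ is circulant, the $(s_1,s_2)$-block of $C^\tT C$ equals $\sum_{t=1}^{m_1}\Circ_m(a^{(t,s_1)})^\tT\Circ_m(a^{(t,s_2)})$, which is again circulant. Thus $C^\tT C = I_d$ is equivalent to the condition that the first column of each of these circulants equals $\delta_{s_1,s_2}e_0$. Equivalently, using that the column of $C$ indexed by (block $s$, position $j$) has entries $a^{(t,s)}_{(j-i)\bmod_s m}$ (extended by zero outside $\{-l,\ldots,l\}$), I would directly compute the inner product of columns with shift $u = j_1-j_2$.

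The substitution $k = u + r$ in the bilinear sum then produces $\sum_{t=1}^{m_1}\sum_k a^{(t,s_1)}_k a^{(t,s_2)}_{k-u}$. The main bookkeeping obstacle is checking that no circular wrap-around distorts the index ranges, and this is exactly where the assumption $m\ge 4l+1$ is needed: for $u\in\{0,\ldots,2l\}$ and $k-u\in\{-l,\ldots,l\}$ one has $k \in \{u-l,\ldots,3l\}\subset\{-l,\ldots,m-l-1\}$, so the only $k$ for which $a^{(t,s_1)}_k$ can be nonzero are those with $k \in \{u-l,\ldots,l\}$, giving the stated formula. For $u\in\{2l+1,\ldots,m-2l-1\}$, the two filter supports are too far apart and the sum vanishes automatically (so nothing needs to be required). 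For $u\in\{m-2l,\ldots,m-1\}$, the circulant symmetry $c^{(s_1,s_2)}_u = c^{(s_2,s_1)}_{m-u}$ reduces matters to the already-listed conditions with swapped $s_1,s_2$, completing the equivalence.

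For the special case $m_1=m_2=1$, the cleanest route is via generating functions. Writing $p(z)=\sum_{k=-l}^l a_k z^k$, the stated conditions $\sum_k a_k a_{k-u}=\delta_{u,0}$ for $u\in\{0,\ldots,2l\}$ are precisely the statement $p(z)p(z^{-1})=1$ as a Laurent polynomial. Setting $q(z) = z^l p(z)\in\R[z]_{\le 2l}$ and its reciprocal $q^*(z) = z^{2l}q(z^{-1})$, this becomes $q(z)q^*(z)=z^{2l}$. Since the right-hand side has a single root at $z=0$ of multiplicity $2l$ and both factors are honest polynomials, we must have $q(z)=\pm z^j$ for some $j\in\{0,\ldots,2l\}$, forcing $a = \pm e_i$ for a single $i\in\{-l,\ldots,l\}$. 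The orthogonal projection of $\Circ_m(a)$ onto $\mathrm{bCirc}(l,m,1,1)\cap\St(m,m)$ then reduces, via $\|\Circ_m(b)\|_F^2 = m\|b\|^2$, to minimizing $\|a-\epsilon e_j\|^2 = \|a\|^2 - 2\epsilon a_j+1$ over $\epsilon\in\{\pm1\}$ and $j\in\{-l,\ldots,l\}$, which is maximized by the sign choice $\epsilon = \mathrm{sgn}(a_j)$ and index $j$ maximizing $|a_j|$ (equivalently $a_j\,\mathrm{sgn}(a_j)$), yielding the claimed projection formula.
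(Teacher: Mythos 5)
Your proof is correct, but it takes a genuinely different route from the paper's in both halves. For the main equivalence, the paper diagonalizes each circulant block via the Fourier matrix (using \eqref{circs}), so that $C^\tT C=I_d$ becomes a trigonometric identity $\sum_{u=-2l}^{2l}\alpha_u^{(s_1,s_2)}e^{2\pi iuj/m}=\delta_{s_1,s_2}$ for all $j$, and the hypothesis $m\ge 4l+1$ enters through the invertibility of the DFT on $4l+1$ coefficients; you instead work entirely in the "time domain", computing column inner products directly and using $m\ge 4l+1$ to rule out wrap-around overlap of the filter supports, with the circulant symmetry $c_u^{(s_1,s_2)}=c_{m-u}^{(s_2,s_1)}$ disposing of the lags $u\in\{m-2l,\ldots,m-1\}$. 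Both arguments are sound; the Fourier route automates the index bookkeeping, while yours is more elementary and makes transparent exactly where spurious overlaps would occur. For the case $m_1=m_2=1$, the paper gives a two-line support argument: if $l_-<l_+$ are the extreme indices of the support of $a^{(1,1)}$, then $\alpha_{l_+-l_-}=a_{l_+}a_{l_-}\ne 0$ contradicts the orthogonality conditions, so the support is a singleton. Your Laurent-polynomial factorization $q(z)q^*(z)=z^{2l}$ reaches the same conclusion via unique factorization; it is slightly heavier machinery but places the result in the standard spectral-factorization framework. One point worth flagging: your projection step correctly identifies the minimizer of $\|a-\epsilon e_j\|^2$ as $\epsilon=\mathrm{sgn}(a_j)$ with $j$ maximizing $|a_j|$; the statement's $\argmax_i a_i$ (without absolute value) must be read this way for the $\mathrm{sgn}(a_j)$ factor to be meaningful, and your derivation, together with the explicit reduction $\|\Circ_m(b)\|_F^2=m\|b\|^2$, is the correct justification.
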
                               

\begin{proof}
Using  \eqref{circs}, we obtain by straightforward computation that 
	$$
	\mathrm{bCirc}\left(l,m,m_1,m_2 \right)^\tT \mathrm{bCirc}\left(l,m,m_1,m_2 \right) = I_{m m_2,m m_2}
	$$
	is equivalent to
	$$
	\sum_{t=1}^{m_1} \sum_{k=-l}^l \sum_{r=-l}^l a_k^{(t,s_1)}a_r^{(t,s_2)} e^{2\pi i (k-r)j/m} = \delta_{s_1,s_2}
	$$
	for all $s_1,s_2 = 1,\ldots,m_2$ and all $j=0,\ldots,m-1$.
	Using
	\[\alpha_u^{(s_1,s_2)} \coloneqq \sum_{t=1}^{m_1} \sum_{k=\max(u-l,-l)}^{\min(l,u+l)} 
	a_k^{(t,s_1)}a_{k-u}^{(t,s_2)}, \quad u=-2l,\ldots,2l, \]
	these equations can be rewritten as
	\smash{$\sum_{u=-2l}^{2l} \alpha_u^{(s_1,s_2)} e^{2\pi i u j/m} = \delta_{s_1,s_2}$}.
	Since $m \ge 4l+1$, we can apply for fixed $s_1,s_2$ the inverse Fourier transform and obtain that 
	the filters must satisfy
	$$
	\alpha_u^{(s_1,s_2)} = \sum_{t=1}^{m_1} \sum_{k=u-l}^{l} 
	a_k^{(t,s_1)}a_{k-u}^{(t,s_2)} = \delta_{s_1,s_2} \delta_{u,0}, \quad u=0,\ldots,2l,\,s_1,s_2 = 1,\ldots,m_2.
	$$
	For $m_1 = m_2 = 1$ this simplifies to
	$$
		\alpha_u^{(1,1)} = \sum_{k=u-l}^l 
	a_k^{(1,1)}a_{k-u}^{(1,1)} = \delta_{u,0}, \quad u=0,\ldots,2l.
	$$
	Let $l_-=\min\{j\in\{-2l,...,2l\}:a^{(1,1)}_j\neq0\}$ and $l_+=\max\{j\in\{-2l,...,2l\}:a^{(1,1)}_j\neq0\}$.
	Then, it holds $l_+=l_-$ as $l_+>l_-$ implies 
	$$
	0=\alpha_{l_+-l_-}^{(1,1)}=a_{l_+}^{(1,1)}a_{l_-}^{(1,1)}\neq 0.
	$$
	Hence, we can conclude by $\alpha_0^{(1,1)}=1$ that the filter $a^{(1,1)}$ is up to the sign a unit vector.
	Finally, the orthogonal projection follows immediately.
\end{proof}

For limited filter length, we have to solve instead of \eqref{eq:opt_problem} the minimization problem
\begin{equation}\label{conv_NN}
\argmin_{(T,b,\alpha) \in \mathcal{U}_2^l \times \mathcal{U}_1 \times \mathcal{U}_0}
\frac1N\sum_{i=1}^N\ell(\mathbf{\Phi}(x_i;u),y_i)
=
\argmin_{u} 
\bigl\{ H(u) + \iota_{\mathcal{U}^l_2} (T) + \iota_{\mathcal{U}_0} (\alpha) \bigr\}.
\end{equation} 
Unfortunately, it seems difficult to fulfill the constraint that the matrices of the cPNN are in $\mathcal{U}^l_2$, i.e., to project onto
\begin{equation} \label{leider}
\mathrm{bCirc}(l,m,m_1,m_2) \cap \St(d,n), \quad n = m_1m, d=m_2m.
\end{equation}
The key problem is that projecting onto Stiefel manifolds, which can be done via the polar decomposition of matrices \cite[Sect.~7.3, 7.4]{HS2013}, increases the filter length.
Further, we have seen in Proposition~\ref{1d} that the optimization domain is not necessarily connected any more, making gradient based approaches likely to fail.
Hence, we propose to minimize the functional
\begin{align}\label{conv_denoiser2}
\argmin_{(T,b,\alpha)\in\mathrm{bCirc}(l,m,m_1,m_2)^K\times \mathcal{U}_1\times\mathcal{U}_0} \Bigl\{H(u)+\mu \sum_{k=1}^K 
\|T_k^\tT T_k-I_d\|_F^2+\iota_{\mathcal{U}_0}(\alpha)\Bigr\}, \quad \mu > 0
\end{align}
instead of \eqref{conv_NN}.
Numerical approximations can be efficiently computed using stochastic gradient descent with additional momentum variables to cope with local minima, such as the Adam optimizer \cite{KB2014}. 
Having a solution $(\tilde T,\hat b, \hat \alpha)$ of \eqref{conv_denoiser2}, we finally modify the matrices 
$\tilde T_k \in \mathrm{bCirc}(l,m,\frac{n_k}{m},\frac{d}{m})$ to be in \eqref{leider} by solving for each 
$k  \in \{1,\ldots,K\}$ the minimization problem
\begin{equation} \label{eq:approx}
\argmin_{T\in \mathrm{bCirc}(l,m,\frac{n_k}{m},\frac{d}{m} ) } F_\lambda(T) ,
\quad  
F_\lambda(T) \coloneqq \|T-\tilde T_k\|_F^2 + \lambda\|T^\tT T-I_n\|_F^2, \quad \lambda \gg 1.
\end{equation}
Note that the nonconvex functional $F_\lambda$ is continuous and coercive, so that there exists a global minimizer.
The following proposition establishes a relation between \eqref{eq:approx} and the orthogonal projection of $\tilde T_k$ onto \eqref{leider}, justifying our last step.
To this end, recall that  a sequence $\{F_j\}_{j\in\N}$ 
of functionals $F_j\colon \mathbb R^d \rightarrow (-\infty,+\infty]$ 
is said to $\Gamma$-\emph{converge} to $F \colon  \mathbb R^d \rightarrow (-\infty,+\infty]$ 
if the following two conditions are fulfilled for every $x \in \mathbb R^d$, see~\cite{Braides02}:
\begin{enumerate}
	\item[i)] $F(x) \leq \liminf_{j \rightarrow \infty} F_j(x_j)$ whenever  $x_j \to x$, 
	\item[ii)] there is a sequence $\{y_j\}_{n\in\N}$ with $y_j \to x$ and $\limsup_{j \to \infty} F_j(y_j) \le F(x)$.
\end{enumerate}
The importance of $\Gamma$-convergence relies in the fact that 
every cluster point of minimizers of $\{F_j\}_{j\in\N}$ is a minimizer of $F$.

\begin{proposition}\label{lem:gam_conv}
Let $C \in \mathrm{bCirc}(l,m,m_1,m_2)$ be given 
and let $(\lambda_j)_j$ be a sequence of positive numbers with $\lambda_j\to\infty$ as $j\to\infty$. 
Then, the sequence of functionals
\[F_{\lambda_j}(T) = \|T-C\|_F^2 + \lambda_j\|T^\tT T-I_n\|_F^2\]
$\Gamma$-converges to $F(T)\coloneqq \|T-C\|_F^2+\iota_{\St(d,n)}(T)$ on  $\mathrm{bCirc}(l,m,m_1,m_2)$ as $j\to\infty$.
\end{proposition}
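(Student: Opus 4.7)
The plan is to verify the two defining inequalities of $\Gamma$-convergence directly, exploiting continuity of $T \mapsto \|T-C\|_F^2$ and $T \mapsto \|T^\tT T - I\|_F^2$ together with $\lambda_j \to \infty$. All sequences are understood to lie in $\mathrm{bCirc}(l,m,m_1,m_2)$.

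For the liminf inequality, fix $T \in \mathrm{bCirc}(l,m,m_1,m_2)$ and any sequence $T_j \to T$ in this set. I would distinguish two cases. If $T \in \St(d,n)$, then $F(T) = \|T-C\|_F^2$; continuity of the Frobenius norm yields $\|T_j - C\|_F^2 \to \|T-C\|_F^2$, while the penalty term is nonnegative, so $\liminf_j F_{\lambda_j}(T_j) \ge \|T-C\|_F^2 = F(T)$. If $T \notin \St(d,n)$, then by continuity $\|T_j^\tT T_j - I\|_F^2 \to \|T^\tT T - I\|_F^2 =: \varepsilon > 0$, so for $j$ large we have $\|T_j^\tT T_j - I\|_F^2 \ge \varepsilon/2$. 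Since $\lambda_j \to \infty$, this forces $F_{\lambda_j}(T_j) \to +\infty = F(T)$.

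For the limsup (recovery-sequence) inequality, I would simply take $T_j \equiv T$. If $T \in \St(d,n)$, then $\|T^\tT T - I\|_F^2 = 0$, so $F_{\lambda_j}(T) = \|T - C\|_F^2 = F(T)$ for all $j$. If $T \notin \St(d,n)$, then $F(T) = +\infty$ and the inequality $\limsup_j F_{\lambda_j}(T) \le +\infty$ is trivial. In both subcases the chosen sequence obviously lies in $\mathrm{bCirc}(l,m,m_1,m_2)$.

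There is not really a substantive obstacle here: the argument is a textbook penalty/indicator $\Gamma$-limit. The only point requiring a moment of care is the liminf step when $T \notin \St(d,n)$, where one has to combine continuity of the penalty with $\lambda_j \to \infty$ to rule out a finite liminf; this is handled by the uniform lower bound $\|T_j^\tT T_j - I\|_F^2 \ge \varepsilon/2$ for large $j$. No further structure of $\mathrm{bCirc}(l,m,m_1,m_2)$ is needed, since the argument is purely topological on this closed linear subspace.
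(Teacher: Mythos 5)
Your proof is correct and follows essentially the same route as the paper's: the same two-case analysis for the liminf inequality (continuity when $T\in\St(d,n)$, a uniform positive lower bound on the penalty term forcing blow-up when $T\notin\St(d,n)$) and the same constant recovery sequence $T_j\equiv T$ for the limsup inequality. No gaps.
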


\begin{proof} 
\emph{1.~$\liminf$-inequality:} We show that for every sequence 
$\{T_j\}_j$ in $\mathrm{bCirc}(l,m,m_1,m_2)$
converging to 
$T \in \mathrm{bCirc}(l,m,m_1,m_2)$ 
it holds
$$
F(T) \le \liminf_{j\to\infty}F_{\lambda_j}(T_j). 
$$
To this end, we distinguish two cases.
If $T\in\St(d,n)$, then 
$$
F(T)=\|T-C\|_F^2 = \lim_{j\to\infty} \|T_j - C\|_F^2 \leq \liminf_{j \to \infty} F_{\lambda_j}(T_j).
$$
For $T\not\in\St(d,n)$, we get using $T_j\to T$ that $\|T_j^\tT T_j-I_n\|_F^2$ 
is bounded from below by some constant $c>0$ for $j$ sufficiently large. 
Thus, we obtain
$$
\liminf_{j \to \infty} F_{\lambda_j}(T_j) \geq \liminf_{j\to\infty} \lambda_j\, c = \infty = F(T).
$$
\emph{2.~$\limsup$-inequality:} We have that $F_{\lambda_j}$ converges pointwise to $F$.
Consequently, we get for the sequence $\{T_j\}_j$ with $T_j = T$ that 
$
\lim_{j\to\infty} F_{\lambda_j} (T) = F(T).
$
\end{proof}

The continuous, coercive function $\| \cdot - C\|^2$ is a lower bound for all functions $F_j$.
Thus, the functions $F_j$, $j \in \mathbb N$, are equi-coercive.
Together with this property, Proposition \ref{lem:gam_conv} yields that every cluster point of the minimizers of $F_{\lambda_j}$ 
is a  minimizer of $F$, see \cite{Braides02}.

A gradient descent scheme for solving \eqref{eq:approx}  is outlined in Algorithm~\ref{alg:Proj_Stiefel}.
Here, $\rho^{(r)}$ is an estimate of the local Lipschitz constant.
Note that the involved derivatives can be computed easily using two times the reverse mode of algorithmic differentiation, see \cite{HS2020}.

\begin{algorithm}[!t]
	\begin{algorithmic}
		\State \textbf{Initialization:}  $T^{(0)}\in\R^{d,n}$
		
		\For{$r=0,1,\ldots$}
		\State $g^{(r)} =\nabla_T F_\lambda(T^{(r)})/\Vert \nabla_T F_\lambda(T^{(r)})\Vert$
		\State $\rho^{(r)}=\|\nabla_T^2 F_\lambda(T^{(r)})\, g^{(r)}\|$
		\State $T^{(r+1)}=T^{(r)}-\nabla_T F_\lambda(T^{(r)})/\rho^{(r)}$
		\EndFor
	\end{algorithmic}
	\caption{Gradient descent scheme for solving \eqref{eq:approx} 
	} \label{alg:Proj_Stiefel}
\end{algorithm}

\begin{remark}[2D Convolutions]
So far we were concerned with convolution matrices $C$ for signals.
When switching to 2-dimensional structures, say images $X$,
we convolve row and columnwise as $C_1 \, X \, C_2^\tT$. 
Reshaping the matrices columnwise $X \to \mathrm{vec} (X)$, we can reformulate the above convolution as
$$
\mathrm{vec} (C_1 \, X \, C_2^\tT) = (C_2 \otimes C_1) \mathrm{vec} (X),
$$
where the Kronecker product $C_2 \otimes C_1$ is now a block circulant matrix with circulant blocks.
Replacing usual circulant matrices with these matrices does not affect the conclusions drawn in this section.
\end{remark}

\section{Scaled cPNNs as Denoisers}\label{sec:DenoisePnNN}
%
Although cPNNs may be useful in different contexts, we focus on their application for denoising of signals and images.
For this purpose, let $y_i\in\R^{m}$, $i=1,\ldots,N$, be ground truth signals with
noisy versions $x_i=y_i+\epsilon_i$ corrupted by additive Gaussion noise with realizations
$\epsilon_i\sim\mathcal N(0,\sigma^2)$.
In Subsection~\ref{subsec:denoise}, we show how to train cPNNs for denoising.
In particular, there are two important observations:
\begin{itemize}
\item[i)] Learning the noise leads to better results than learning to denoise directly.
This approach is known as residual learning in the literature \cite{ZZCMZ2017}.
\item[ii)] Learning scaled cPNNs with \emph{one additional fixed scaling parameter} $\gamma >1$ 
is superior to standard cPNNs.
Note that $\gamma$ is an upper bound on the Lipschitz constant of the whole network.
\end{itemize}
Subsection \ref{subsec:pwc_signals_images} demonstrates the performance of scaled cPNNs for denoising of signals and images corrupted by Gaussian noise based on numerical results.
In the subsequent Section~\ref{sec:PnP}, we apply cPNN based denoisers within a PnP framework.

\subsection{Learning scaled cPNNs}\label{subsec:denoise}
For inputs $x \in \R^m$, we train a cPNN $\Phi(\cdot\,;u)$  
with parameters $u=(T,b,\alpha)\in\mathcal{U}_2^l \times \mathcal{U}_1 \times  \mathcal{U}_0$ and identical layer sizes $T_k \in \R^{m_1 m,m_2 m}$, $k=1,\ldots,K$, where $1 \le m_1 \le m_2$, i.e.,
$n_k = m_1 m$ for all $k=1,\ldots,K$.
Moreover, the parameters in $\mathcal{U}_1$ are chosen in a simplified way, 
namely starting from a vector $b = (b_1,\ldots,b_{m_1})^\tT \in \R^{m_1}$, we use
\begin{equation} \label{vec_b}
\mathcal{U}_1 \coloneqq \bigl\{( b \otimes 1_m)^K: b \in \R^{m_1} \bigr\}.
\end{equation}
As usual, the whole network is trained with $m_2$ copies of (patches of) the signals/images.
In other words, we introduce $A \in \St(m,m_2 m)$ given by
$$
A =\frac1{\sqrt{m_2}}\left(\begin{array}{c} I_{m}\\\vdots\\I_{m}\end{array}\right)\in\R^{m_2 m,m}
$$
and define $\Psi\colon\R^{m}\to\R^{m}$ by
\begin{equation}\label{eq:DenoisePNN}
	\Psi (x;u)= A^\tT\Phi(A x;u).
\end{equation}
Clearly, $\Psi\colon \R^m \rightarrow \R^m$ is again an averaged operator.
For a fixed scaling parameter $\gamma \ge 1$,
we aim to optimize the network parameters $u$ such that 
$\gamma \Psi(x_i;u) \approx \epsilon_i=x_i-y_i$, $i=1,\ldots,N$. 
For this purpose, we intend to solve
\begin{align}
\argmin_{(T,b,\alpha)\in \mathcal{U}_2^l \times \mathcal{U}_1 \times \mathcal{U}_0}
\frac1N\sum_{i=1}^N \ell\bigl(\gamma \Psi(x_i;u),\epsilon_i\bigr)
=
\argmin_u \bigl\{H(u)+\iota_{\mathcal{U}_2^l}(T)+\iota_{\mathcal{U}_0}(\alpha)\bigr\} \label{conv_denoiser1_b}
\end{align}
with $H(u) \coloneqq \frac1N\sum_{i=1}^N \ell(\gamma \Psi(x_i;u),\epsilon_i)$ 
and the quadratic loss function $\ell(x,y) = \|x-y\|^2$.

For cPNNs with filters of full length, we apply the stochastic gradient descent Algorithm~\ref{alg:SGD_Stiefel}
on the corresponding submanifold of the Stiefel manifold as shown in Subsection~\ref{sec:cPNN+matrix}. 
However, this is only possible with reasonable effort for signals of moderate size, such as the examples in the next subsection.
For cPNNs with limited filter length, we use the procedure from Subsection \ref{sec:cPNN+limit}, consisting of the two steps outlined in Algorithm~\ref{alg:cPNN_limited}.

\begin{algorithm}[t]
	\begin{algorithmic}
		\State \textbf{Input:}  Training data $x_i,\varepsilon_i$, $i=1,\ldots,N$; parameters $\gamma \ge 1$, $\mu >0$, $\lambda \gg 1$
		\State \textbf{Computation:}
		\State 1. Solve the relaxed  problem
\begin{align}
(\tilde T, \hat b, \hat \alpha) \in \argmin_{(T,b,\alpha)\in\mathrm{bCirc}(l,m,m_1,m_2)^K\times \mathcal{U}_1\times\mathcal{U}_0} 
\Bigl\{H(u)+\mu \sum_{k=1}^K \|T_k^\tT T_k-I\|_F^2+\iota_{\mathcal{U}_0}(\alpha)\Bigr\},\label{conv_denoiser2_b}
\end{align}
with the Adam optimizer \cite{KB2014}.
\vspace{.1cm}

\State 2. Compute the solution $\hat T$ of \eqref{conv_denoiser2} via Algorithm \ref{alg:Proj_Stiefel} to approximate the projection of $\tilde T$ onto $\mathcal{U}_2^l$.
\end{algorithmic}
\caption{Training scaled cPNNs with limited filter length}
\label{alg:cPNN_limited}
\end{algorithm}

Usually the network is trained only with patches of the original signals/images and the signal/image that we want to denoise is of larger size $\tilde m > m$.
To construct a denoiser from the trained network, we just fill in  
the convolution filters in $\hat T_k$, $k=1,\ldots,K$, by zeros to get matrices in $\mathrm{bCirc}(l,\tilde m,m_1,m_2)$, 
and the vectors of $\mathcal U_1$ in \eqref{vec_b} by ones, i.e., we use $\hat b \otimes 1_{\tilde m}$.
Clearly, the resulting network $\Psi\colon \mathbb R^{\tilde m} \to \mathbb R^{\tilde m}$ is again averaged
and we use the same notation $\Psi$ as the size is obvious from the context.
Finally, we compute the denoised version $y\in\R^{\tilde m}$ of $x\in\R^{\tilde m}$ by applying the denoiser
\begin{equation} \label{eq:denoiser}
y = x - \gamma \Psi(x;\hat u)  = \mathcal D (x), \quad \mathcal D \coloneqq I_{\tilde m} - \gamma \Psi.
\end{equation}

\subsection{Numerical Results for Denoising }\label{subsec:pwc_signals_images}
In this subsection, we present denoising results for signals and images, where we use the 
ReLU activation function $\sigma(x)=\max(x,0)$.
The quality of the results is compared in terms of the peak signal-to-noise ratio (PSNR), which is defined for a predicted signal $x\in\R^d$ and a ground truth $y\in\R^d$ as
$$
\mathrm{PSNR}(x,y)\coloneqq 10\log_{10}\biggl(\frac{\max y-\min y}{\tfrac1d\sum_{i=1}^d (x_i-y_i)^2}\biggr).
$$
If we deal with gray-valued images living on $[0,1]^{d_1\times d_2}$ instead, the PSNR is defined as
$$
\mathrm{PSNR}(x,y)\coloneqq 10\log_{10}\biggl(\frac{1}{\tfrac1{d_1d_2}\sum_{i=1}^{d_1}\sum_{j=1}^{d_2} (x_{ij}-y_{ij})^2}\biggr).
$$

\paragraph{Denoising of piecewise constant signals}
First, we apply scaled cPNNs for denoising the same signals as in \cite{HHNPSS2019}. 
Note  that in \cite{HHNPSS2019} the PNNs were neither convolutional nor using residual learning.
By $(x_i,y_i)\in\R^{m}\times\R^{m}$, $i=1,\ldots,N$, we denote pairs of piecewise constant signals $y_i$ of length $m=128$ and their noisy versions by $x_i=y_i+\epsilon_i$, 
where $\epsilon_i$ is Gaussian noise with standard deviation $\sigma = 0.1$. 
For the \emph{signal generation}, we choose 
\begin{itemize}
\item  the number of constant parts of $y_i$ as $\max\{2,t_i\}$, where $t_i$ is the realization of a random variable 
following the Poisson distribution with mean $5$;
\item the discontinuities of $y_i$ as realization of a uniform distribution;
\item the signal intensity of $y_i$ for every constant part as realization of the standard normal distribution, where we subtract the mean of the signal finally.
\end{itemize}
Using this procedure, we generate training data $(x_i,y_i)_{i=1}^N$ and test data $(x_i,y_i)_{i=N+1}^{N+N_\text{test}}$ 
with $N=500000$ and $N_\text{test}=1000$. Note that $\tilde m = m$ here.
Further, the average PSNR of the noisy signals in the test set is $25.22$.

For denoising, we apply scaled cPNNs with $K=5$ layers, $m_1=64$, $m_2=128$
and different scaling factors $\gamma\in\{1,1.99,5\}$ both for filters of full and limited length $2l+1$ with $l\in\{2,5,10\}$.
We train the networks using the approximative Algorithm \ref{alg:cPNN_limited}.
As loss function we use the squared $\ell^2$-norm $\ell(x,y)=\|x-y\|_2^2$.
Moreover, we include results obtained by a CNN with the same architecture as our cPNNs, but without orthogonality constraint on the rows of the matrices $T_k$, $k=1,\ldots,K$.
Table \ref{tab:full_vs_limit} contains the resulting average PSNRs on the test set.
As expected, the PSNRs increase for larger filter lengths.
Further, we observe that the results with limited filters become better than the ones with full filters if $l$ is large enough. 
This is most likely caused by overfitting effects, e.g., for $\gamma=5$ the loss function on the training set is given by $0.0734$ for full filters and by $0.0747$ for limited length $l=10$. 
That is, although the average PSNR on the test set is better for limited filter filter lengths, the loss on the training set is worse than for full filters.
As we do not observe overfitting for limited filter lengths, further discussion of this issue is out of scope of this paper.

\begin{table}
\centering
\begin{tabular}{c|cccc}
Method&full filters&$l=2$&$l=5$&$l=10$\\\hline
cPNN, $\gamma=1\phantom{.00}$&$35.73$&$34.24$&$35.03$&$35.52$\\
cPNN, $\gamma=1.99$&$37.93$&$36.12$&$36.88$&$37.68$\\
cPNN, $\gamma=5\phantom{.00}$&$38.88$&$37.24$&$37.99$&$39.02$\\
unconst.~CNN\phantom{.0}&$39.26$&$37.80$&$38.52$&$39.26$\\
\end{tabular}
\caption{Average PSNR for denoising piecewise constant signals 
corrupted by Gaussian noise with $\sigma=0.1$ using different cPNNs.}
\label{tab:full_vs_limit}
\end{table}

\begin{table}[p]
\centering
\begin{tabular}{c|c|c|c|c|c|c|c}
	Method & BM3D & $\gamma=1$ & $\gamma=1.99$ & $\gamma=5$ & $\gamma=10$ & unconst.~CNN& DnCNN\\\hline
	PSNR & $28.59$ & $28.48$ & $28.81$ & $29.02$ & $29.08$ & $29.11$ & $29.23$
\end{tabular}
\caption{Average PSNR for denoising images from the BSD68 test set corrupted by Gaussian noise with $\sigma = 25/255$ using the DnCNN, BM3D, cPNNs for different choices of $\gamma$ and an unconstrained CNN with the same structure as the cPNNs.}
\label{tab:results_gamma}
\end{table}

\begin{figure}[p]
\begin{subfigure}[t]{0.166\textwidth}
\includegraphics[width=\textwidth]{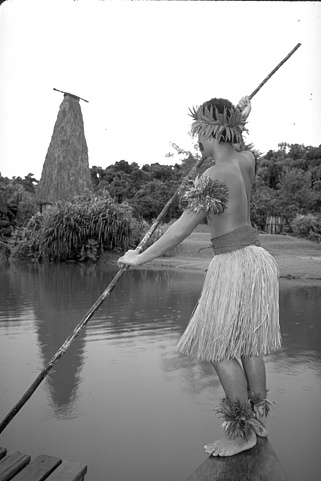}
\end{subfigure}\hfill
\begin{subfigure}[t]{0.166\textwidth}
\includegraphics[width=\textwidth]{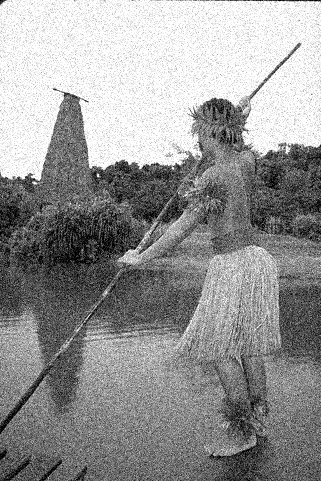}
\end{subfigure}\hfill
\begin{subfigure}[t]{0.166\textwidth}
\includegraphics[width=\textwidth]{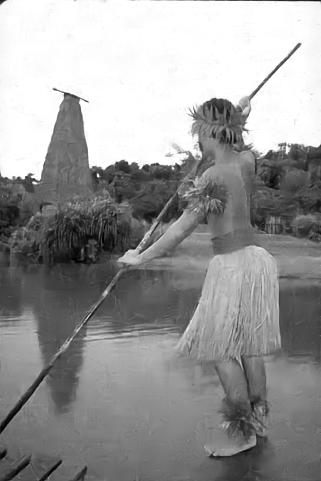}
\end{subfigure}\hfill
\begin{subfigure}[t]{0.166\textwidth}
\includegraphics[width=\textwidth]{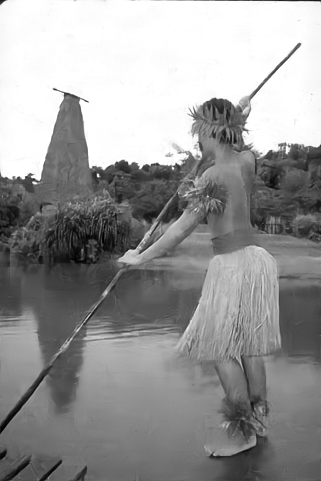}
\end{subfigure}\hfill
\begin{subfigure}[t]{0.166\textwidth}
\includegraphics[width=\textwidth]{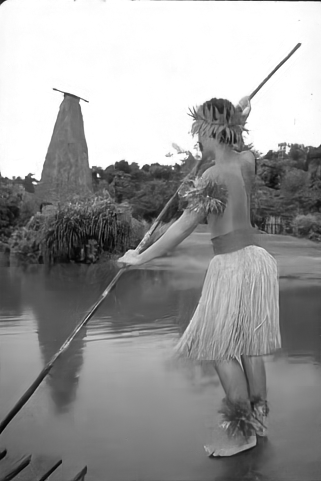}
\end{subfigure}\hfill
\begin{subfigure}[t]{0.166\textwidth}
\includegraphics[width=\textwidth]{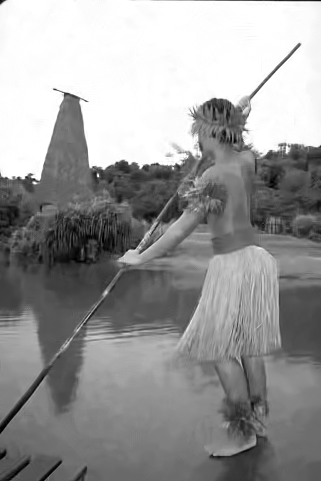}
\end{subfigure}
\begin{subfigure}[t]{0.166\textwidth}
\includegraphics[width=\textwidth]{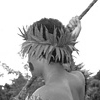}
\caption*{\scriptsize Original}
\end{subfigure}\hfill
\begin{subfigure}[t]{0.166\textwidth}
\includegraphics[width=\textwidth]{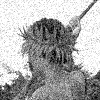}
\caption*{\scriptsize Noisy\\PSNR $20.15$}
\end{subfigure}\hfill
\begin{subfigure}[t]{0.166\textwidth}
\includegraphics[width=\textwidth]{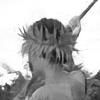}
\caption*{\scriptsize cPNN, $\gamma=1.99$\\PSNR $29.84$}
\end{subfigure}\hfill
\begin{subfigure}[t]{0.166\textwidth}
\includegraphics[width=\textwidth]{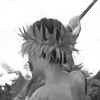}
\caption*{\scriptsize cPNN, $\gamma=5$\\PSNR $30.07$}
\end{subfigure}\hfill
\begin{subfigure}[t]{0.166\textwidth}
\includegraphics[width=\textwidth]{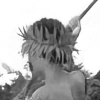}
\caption*{\scriptsize unconst.~CNN\\PSNR $30.19$}
\end{subfigure}\hfill
\begin{subfigure}[t]{0.166\textwidth}
\includegraphics[width=\textwidth]{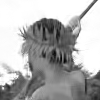}
\caption*{\scriptsize BM3D\\PSNR $29.62$}
\end{subfigure}
\vspace{.5cm}

\begin{subfigure}[t]{0.166\textwidth}
\includegraphics[width=\textwidth]{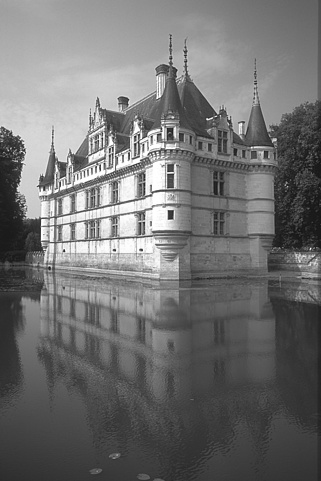}
\end{subfigure}\hfill
\begin{subfigure}[t]{0.166\textwidth}
\includegraphics[width=\textwidth]{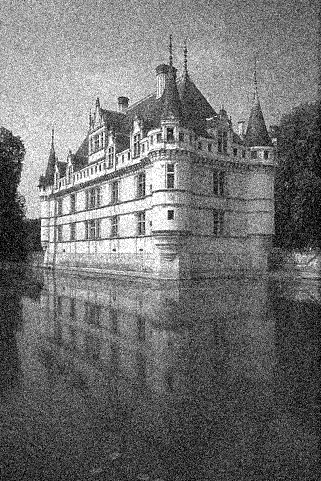}
\end{subfigure}\hfill
\begin{subfigure}[t]{0.166\textwidth}
\includegraphics[width=\textwidth]{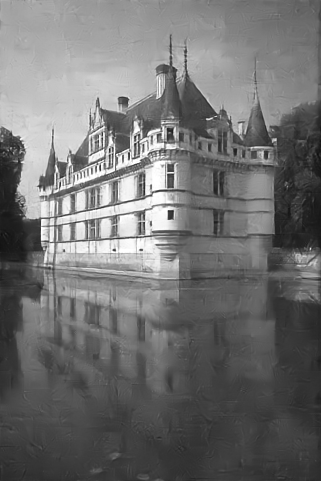}
\end{subfigure}\hfill
\begin{subfigure}[t]{0.166\textwidth}
\includegraphics[width=\textwidth]{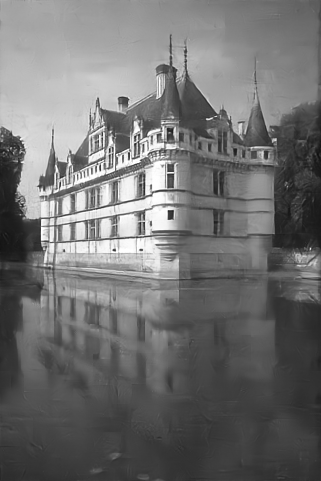}
\end{subfigure}\hfill
\begin{subfigure}[t]{0.166\textwidth}
\includegraphics[width=\textwidth]{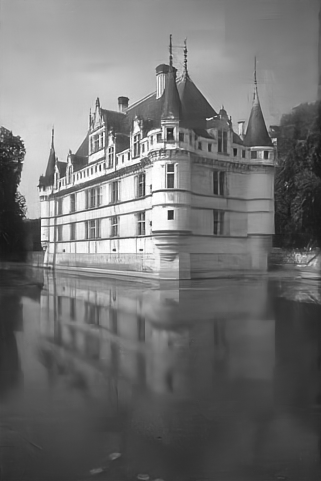}
\end{subfigure}\hfill
\begin{subfigure}[t]{0.166\textwidth}
\includegraphics[width=\textwidth]{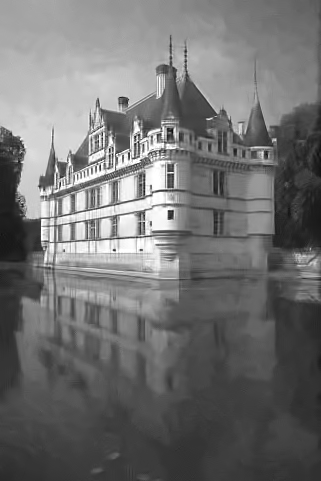}
\end{subfigure}
\begin{subfigure}[t]{0.166\textwidth}
\includegraphics[width=\textwidth]{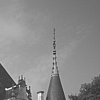}
\caption*{\scriptsize Original}
\end{subfigure}\hfill
\begin{subfigure}[t]{0.166\textwidth}
\includegraphics[width=\textwidth]{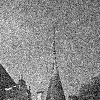}
\caption*{\scriptsize Noisy\\PSNR $20.17$}
\end{subfigure}\hfill
\begin{subfigure}[t]{0.166\textwidth}
\includegraphics[width=\textwidth]{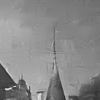}
\caption*{\scriptsize cPNN, $\gamma=1.99$\\PSNR $29.68$}
\end{subfigure}\hfill
\begin{subfigure}[t]{0.166\textwidth}
\includegraphics[width=\textwidth]{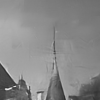}
\caption*{\scriptsize cPNN, $\gamma=5$\\PSNR $29.93$}
\end{subfigure}\hfill
\begin{subfigure}[t]{0.166\textwidth}
\includegraphics[width=\textwidth]{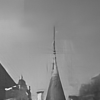}
\caption*{\scriptsize unconst.~CNN\\PSNR $29.86$}
\end{subfigure}\hfill
\begin{subfigure}[t]{0.166\textwidth}
\includegraphics[width=\textwidth]{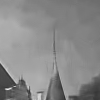}
\caption*{\scriptsize BM3D\\PSNR $29.56$}
\end{subfigure}
\caption{Denoising results for images corrupted by Gaussian noise with $\sigma = 25/255$ using different methods.}
\label{fig_denoise}
\end{figure}

\paragraph{Denoising of gray-valued images}

Next, we want to denoise natural images with gray values in $[0,1]$ 
corrupted by Gaussian noise with standard deviation 
$\sigma = 25/255 \approx 0.098$ using scaled cPNNs.
We train cPNNs with the parameters $m_1=64$, $m_2=128$, $K=8$, $l=5$ and $\gamma\in\{1,1.99,5,10\}$.
As training data, we use $60000$ patches of size $40\times 40$, which are cropped from the $400$  training and validation images in the BSDS500 dataset \cite{MFTM2001}.
Using the loss function $\ell(x,y)=\|x-y\|_F^2$, we train the network with the Adam optimizer for $3000$ epochs.

For assessing the quality of our scaled cPNNs, we apply them onto images from the BSD68 test set.
As comparison we use DnCNN \cite{ZZCMZ2017}, BM3D \cite{DFKE2007} 
and an unconstrained CNN with the same architecture as our cPNN.
The noisy images have an average PSNR of $20.17$ and the resulting average PSNRs for the different methods are given in Table~\ref{tab:results_gamma}.
Additionally, two example images are provided in Figure~\ref{fig_pnp_blur}.
We observe that for $\gamma=1.99$ our proposed cPNN outperforms BM3D.
This particular choice of $\gamma$ turns out to be important in the next section.
If we further increase $\gamma$, the average PSNR becomes even better.
For $\gamma=10$ the average PSNR of our cPNN is approximately $0.2$ below DnCNN.
Overall, we observe that the increase in PSNR saturates quite quickly, i.e., 
using a network with relatively low Lipschitz constant turns out to be sufficient.

\section{Plug-and-Play Algorithms with scaled cPNNs} \label{sec:PnP}
PnP algorithms were first introduced in \cite{SVW2016,VBW13} and have led to improved results in various image restoration 
problems, see \cite{CWE2016,MMHC2017,Ono2017,TBF2017}.
Such algorithms are based on the observation that the proximal operator with respect to the regularizer in the ADMM is basically a denoising step.
Hence, it seems natural to replace this step by a more powerful denoiser such as BM3D \cite{DFKE2007,DKE2012} or NNs \cite{SWK2019}.
Moreover, the approach can be also used within the ISTA algortihm \cite{DDM04}, where the proximal operator of the regularizer corresponds to a soft thresholding step.
However, soft thresholding is just the proximal function of the (grouped) $\ell_1$ norm and ISTA itself is a special case of forward-backward splitting algorithms (FBS) \cite{CW05}.
Consequently, the same argumentation holds for the FBS algorithm and its accelerations as well.
FBS and ADMM are designed for minimizing the sum
\begin{equation}\label{eq:sum}
	f(x) + g(x), \quad f,g \in \Gamma_0(\R^m)
\end{equation}
and outlined in Algorithms~\ref{alg:FBS} and \ref{alg:ADMM}, respectively.
For the FBS algorithm, the function $f$  has to be additionally differentiable with Lipschitz continuous gradient.
The modified PnP steps are included in the indented rows of the algorithms.
In general, the PnP variants do not minimize a sum of functions as in \eqref{eq:sum}.
To guarantee convergence of the algorithm, $\mathcal{D}$ has to fulfill certain
properties.
Indeed, it was shown in \cite{SKM2019} that we do not have convergence within a FBS-PnP  
for $\mathcal D = \text{DnCCN}$, see \cite{ZZCMZ2017} for details on DnCCN.

\begin{figure}[t]
\algcaption{FBS and FBS-PnP (intended row)}\label{alg:FBS}
\begin{algorithmic}
\State \textbf{Initialization:} $y^{(0)}\in \mathbb R^m$, $\eta \in (0,\frac{2}{L} )$
\State \textbf{Iterations:} For $r = 0,1,\ldots$
\State
$
\begin{array}{lcl}
y^{(r+1)} &=& x^{(r)} - \eta \nabla f (x^{(r)})
\\
x^{(r+1)} &=& \prox_{\eta g} ( y^{(r+1)})
\end{array}
$
\State \hspace{1cm}
\textbf{PnP Step:} 
$\mathbf{
\begin{array}{lcl}
x^{(r+1)} &=& \mathcal{D} ( y^{(r+1)})
\end{array}
}
$
\end{algorithmic}
\bottomrule
\vspace{.4cm}

\algcaption{ADMM and ADMM-PnP (intended row)}\label{alg:ADMM}
\begin{algorithmic}
\State \textbf{Initialization:} $y^{(0)}\in \mathbb R^m$, $p^{(0)} \in \mathbb R^m$, $\eta > 0$
\State \textbf{Iterations:} For $r = 0,1,\ldots$
\State
$
\begin{array}{lcl}
x^{(r+1)}&=& \prox_{\frac{1}{\eta} f} ( y^{(r)} - \tfrac{1}{\eta } p^{(r)} )
\\
y^{(r+1)} &=& \prox_{\frac{1}{\eta} g} (  x^{(r+1)} + \tfrac{1}{\eta} p^{(r)})
\end{array}
$
\State \hspace{1cm}
\textbf{PnP Step:} 
$
\begin{array}{lcl}
y^{(r+1)} &=& \mathcal{D} ( x^{(r+1)} + \tfrac{1}{\eta} p^{(r)})
\end{array}
$
\State
$
\begin{array}{lcl}
p^{(r+1)}&=& p^{(r)} + \eta( x^{(r+1)} -  y^{(r+1)} ) 
\end{array}
$
\end{algorithmic}
\bottomrule
\end{figure}

The following proposition summarizes convergence results for FBS-PnP and ADMM-PnP.
To make the paper self-contained and as we have not found an explicit reference,
we provide the proof in the appendix.
\begin{proposition}\label{conv_ADMM_PnP}
\hspace{.1em}
\begin{itemize}
	\item[i)] Let $f\colon\R^m\to\R$ be convex and differentiable with $L$-Lipschitz continuous gradient 
	and let $\mathcal{D}\colon\R^m\to\R^m$ be averaged. 
	Then, for any $0<\eta<\tfrac2L$, the sequence $\{x^{(r)} \}_r$ generated by the FBS-PnP algorithm converges.
	\item[ii)] Let $f\in \Gamma_0(\R^m)$
	and $\mathcal{D}\colon\mathbb R^m \rightarrow \mathbb R^m$ be $\frac12$-averaged.
	Then, the sequence $\{x^{(r)} \}_r$ generated by the \textrm{ADMM-PnP} converges.
	
\end{itemize}
\end{proposition}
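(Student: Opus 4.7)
The plan for part i) is to recognize that the FBS-PnP iterate reads $x^{(r+1)} = (\mathcal{D} \circ (I_m - \eta \nabla f))(x^{(r)})$, so it suffices to show that the composition is averaged and then invoke Theorem~\ref{alpha_lin}v). First I would invoke the Baillon--Haddad theorem, which states that the gradient of a convex function with $L$-Lipschitz gradient is $(1/L)$-cocoercive; a short calculation then yields that $I_m - \eta \nabla f$ is $(\eta L/2)$-averaged for every $\eta \in (0, 2/L)$. Since $\mathcal{D}$ is averaged by assumption, Theorem~\ref{alpha_lin}iv) implies that the composition $\mathcal{D} \circ (I_m - \eta \nabla f)$ is averaged as well, and Theorem~\ref{alpha_lin}v) then gives convergence of the iterates to a fixed point.

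For part ii), the plan is to reformulate ADMM-PnP as a Krasnoselski--Mann iteration driven by a Douglas--Rachford-type composition. After the scaling $q^{(r)} = p^{(r)}/\eta$, introduce the reflections $R_f \coloneqq 2\prox_{f/\eta} - I_m$ and $R_\mathcal{D} \coloneqq 2\mathcal{D} - I_m$. Both are non-expansive: $R_f$ because every proximity operator is $\tfrac{1}{2}$-averaged by Moreau's characterization, and $R_\mathcal{D}$ because $\mathcal{D}$ is $\tfrac{1}{2}$-averaged by assumption. By choosing an appropriate auxiliary variable $z^{(r)}$ built from the current primal and dual iterates, a direct manipulation of the three ADMM-PnP updates should produce the single recursion
\[
z^{(r+1)} = \tfrac{1}{2} z^{(r)} + \tfrac{1}{2} R_\mathcal{D} R_f(z^{(r)}),
\]
which is the Krasnoselski--Mann iteration with the non-expansive operator $R_\mathcal{D} R_f$ and hence itself $\tfrac{1}{2}$-averaged. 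Convergence of $\{z^{(r)}\}_r$ follows from Theorem~\ref{alpha_lin}v), and continuity of $\prox_{f/\eta}$ and $\mathcal{D}$ transfers the convergence back to the original iterates $x^{(r)}$, $y^{(r)}$, $p^{(r)}$.

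The main obstacle is the algebraic bookkeeping in part ii). Because the PnP step has replaced a proximity operator by an abstract $\tfrac{1}{2}$-averaged operator, the usual textbook reduction of ADMM to Douglas--Rachford through a dual potential does not apply verbatim, and one has to rederive the correct auxiliary variable purely at the operator level, verifying term by term that the three-variable update collapses to the displayed single-variable step. Once this reduction is in place, the convergence analysis reduces entirely to the averaged-operator toolbox in Theorem~\ref{alpha_lin}. Both parts implicitly require a nonempty fixed-point set for the underlying averaged update, which enters through the hypothesis of Theorem~\ref{alpha_lin}v).
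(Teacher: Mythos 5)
Your proposal follows essentially the same route as the paper: part i) is the identical Baillon--Haddad plus composition-of-averaged-operators argument, and part ii) is the same reduction of ADMM-PnP to a Krasnoselski--Mann iteration $t^{(r+1)}=\tfrac12\bigl(I+\tfrac1\eta R_2\circ R_1\bigr)(t^{(r)})$ in the auxiliary variable $t^{(r+1)}=x^{(r+1)}+\tfrac1\eta p^{(r)}$, with $R_1$ the (scaled) reflection of the denoiser and $R_2=I-2\prox_{\eta f^*}$ obtained via the Moreau decomposition. The only cosmetic differences are that the paper's bookkeeping produces the two reflections composed in the opposite order from the one you wrote and expresses the $f$-reflection through $\prox_{\eta f^*}$ rather than $\prox_{f/\eta}$; both are immaterial since the composition is non-expansive either way.
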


Regarding the setting of part ii), we cannot hope for convergence if $\mathcal{D}$ is just averaged as the following example shows.

\begin{example}
In the following, we consider the case $m=1$ and provide for any $1>t>\tfrac12$ an $t$-averaged operator $\Psi$ and a function $f$ such that ADMM-PnP diverges.
For simplicity of notation, we assume $\gamma=1$ and remark that for any other $\gamma>0$ a similar example can be constructed. 
Let
\[\Psi(x)\coloneqq -a_1 x=\tfrac{1-a_1}{2}x+\tfrac{1+a_1}{2}(-x), \quad a_1\coloneqq 2t-1\in (0,1)\]
and \smash{$\prox_{f}=\tfrac12 (I+R)$}, where $R(x)=-a_2 x$ with $1>a_2>0$.
By Proposition~\ref{prop:stab}, we get that \smash{$\prox_f$} is indeed a proximity operator.
Now, assume that for $y^{(0)}\in\R\backslash\{0\}$ and $p^{(0)}=0$ the sequence $(x^{(r)},y^{(r)},p^{(r)})_r$ is generated by ADMM-PnP.
Then, we define $t^{(r)}\coloneqq x^{(r+1)}+p^{(r)}$ and use this to rewrite the iterations as
\begin{align}
&y^{(r+1)}=\Psi\bigl(t^{(r)}\bigr)=-a_1 t^{(r)}\label{y_on_t},\\
&p^{(r+1)}=t^{(r)}-y^{(r+1)}=(1+a_1)t^{(r)}.
\end{align}
Thus, we get that
\begin{align}
x^{(r+2)}=\prox_f\bigl(y^{(r+1)}-p^{(r+1)}\bigr)=\prox_f\bigl(-(1+2a_1)t^{(r)}\bigr)=-\tfrac{(1-a_2)(1+2a_1)}{2}t^{(r)},\label{x_on_t}
\end{align}
leading to the following recursion formula for $t$:
$$
t^{(r+1)}=x^{(r+2)}+p^{(r+1)}=\bigl(1+a_1-\tfrac{(1-a_2)(1+2a_1)}{2}\bigr)t^{(r)}.
$$
Now, choose $0<a_2<1$ large enough such that $a_1>\tfrac{(1-a_2)(1+2a_1)}{2}$. Then, we get that
$$
t^{(r+1)}=ct^{(r)},\quad c=(1+a_1)-\tfrac{(1-a_2)(1+2a_1)}{2}>1.
$$
Since $t^{(0)}=x^{(1)}+p^{(0)}=-a_2 y^{(0)}\neq 0$, this implies that the sequence $(t^{(r)})_r$ diverges.
By \eqref{y_on_t} and \eqref{x_on_t}, we obtain that the sequences $(x^{(r)})_r$ and $(y^{(r)})_r$ diverge.
\end{example}

\begin{remark}
Note that FBS-PnP (Algorithm~\ref{alg:FBS}) with data fidelity term $f$ and parameter $\eta$ is equivalent to the FBS-PnP iteration with data fidelity term $\eta f$ and parameter $1$.
Consequently, the parameter $\eta$ in FBS-PnP controls the weighting between denoising step and data fit, where a larger value of $\eta$ corresponds to a better fit. 
Thus, the convergence condition $\eta\in(0,\tfrac2L)$ limits the noise levels that can be tackled with FBS-PnP.

Similarly, in ADMM-PnP we replace the proximal operator $\prox_{\eta^{-1} g}$ with respect to the regularizer $g$ by our denoiser. 
Again, $\eta$ controls the weighting between denoising and data fit and a larger value of $\eta$ corresponds to a better fit.
Since ADMM-PnP converges independent of $\eta$, any noise level can be tackled.
\end{remark}

\paragraph{Choice of the denoiser $\mathcal{D}$ using scaled cPNNs}
Here, we want to apply our denoiser $\mathcal D = I - \gamma \Psi$
from \eqref{eq:denoiser} within the PnP framework.
Unfortunately, although $\Psi$ is averaged,  this is no longer true for $\mathcal D$.
As a remedy, we propose to use an oracle $x^*$ to obtain again an averaged operator as denoiser.

\begin{lemma}\label{lem:avaraged}
Let $x^* \in \mathbb R^{m}$ be fixed.
Further, let $\Psi\colon \mathbb R^{m} \to \mathbb R^{m}$ be an $t$-averaged operator 
with $t \in [\tfrac12,1]$.
For a scaling factor $0<\gamma<2$, the mapping
\begin{equation} \label{eq:oracle_denoise}
\mathcal D(x)=\big(1-\tfrac{1}{1-\gamma+2t\gamma}\big)x^*+\tfrac{1}{1-\gamma+2t\gamma}(I_m-\gamma \Psi(x))
\end{equation}
is $\tilde t$-averaged with $\tilde t=\tfrac{t\gamma}{1-\gamma+2t\gamma}$.
\end{lemma}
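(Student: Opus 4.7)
The plan is to unfold the definition of $t$-averagedness to write $\Psi = t S + (1-t) I_m$ with some non-expansive operator $S\colon \mathbb R^m \to \mathbb R^m$, substitute this into the formula \eqref{eq:oracle_denoise}, and then reorganize the result into the form $\tilde t R + (1-\tilde t) I_m$ by reading off the candidate $R$ and checking its non-expansiveness.

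For brevity, set $c \coloneqq \tfrac{1}{1-\gamma+2t\gamma}$ so that $\mathcal D(x) = (1-c)x^* + c\,(x - \gamma\Psi(x))$. Substituting the decomposition of $\Psi$ yields
\begin{equation}
    \mathcal D(x) = (1-c)x^* + c\bigl(1 - \gamma + \gamma t\bigr) x - c\gamma t\, S(x).
\end{equation}
A direct computation shows $1 - \tilde t = c(1 - \gamma + \gamma t)$ and $\tilde t = c\gamma t$, so the formula becomes
\begin{equation}
    \mathcal D(x) = (1-c)x^* + (1-\tilde t)\,x - \tilde t\, S(x) = \tilde t\,\Bigl[\tfrac{1-c}{\tilde t}x^* - S(x)\Bigr] + (1-\tilde t)\,x.
\end{equation}
Hence defining $R(x) \coloneqq \tfrac{1-c}{\tilde t}x^* - S(x)$ gives the desired averaged form, and since $R$ differs from $-S$ only by a constant shift, it is non-expansive whenever $S$ is.

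What is left is the bookkeeping check that $\tilde t \in (0,1)$ for all admissible parameters $t \in [\tfrac12, 1]$ and $\gamma \in (0,2)$. The denominator $1 - \gamma + 2t\gamma = 1 + \gamma(2t-1)$ is strictly positive because $2t - 1 \ge 0$, yielding $\tilde t > 0$. The inequality $\tilde t < 1$ reduces to $1 + \gamma(t-1) > 0$, which holds because $\gamma(1-t) < 2 \cdot \tfrac12 = 1$. I expect the main (minor) obstacle to be purely algebraic: identifying the right constant multiple of $x^*$ so that the coefficient of $x$ collapses exactly to $1-\tilde t$, and the coefficient in front of $S(x)$ collapses exactly to $\tilde t$, which is what motivates the specific choice of the weight $c$ in \eqref{eq:oracle_denoise}.
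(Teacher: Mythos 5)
Your proposal is correct and follows essentially the same route as the paper's proof: decompose $\Psi=(1-t)I_m+tS$, substitute into $\mathcal D$, and regroup so that the identity carries coefficient $1-\tilde t$ while the constant $x^*$-term is absorbed into the non-expansive part. The only cosmetic difference is that you make the absorbed operator $R(x)=\tfrac{1-c}{\tilde t}x^*-S(x)$ explicit and verify $\tilde t\in(0,1)$ directly, whereas the paper checks positivity of $1-\gamma+t\gamma$ and simply remarks that the $x^*$-term is a constant.
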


\begin{proof}
Since $\Psi$ is $t$-averaged, we have that $\Psi=(1-t)I_m+t R$ 
for some non-expansive operator $R$.
Thus, we get 
$$
I_m-\gamma \Psi=(1-\gamma+t\gamma) I_m -t\gamma R.
$$
As $0<\gamma<2$ and $t\geq\tfrac12$, we have that $1-\gamma+t\gamma\geq 1-\tfrac{\gamma}{2}>0$.
This implies 
$$
\tfrac{1}{1-\gamma+2t\gamma}(I_m-\gamma \Psi)
=
\tfrac{1-\gamma+t\gamma}{1-\gamma+2t\gamma}I_m-\tfrac{t\gamma}{1-\gamma+2t\gamma} R
=
\big(1-\tfrac{t\gamma}{1-\gamma+2t\gamma}\big)I_m-\tfrac{t\gamma}{1-\gamma+2t\gamma}R.
$$
Since $(1-\tfrac{1}{1-\gamma+2t\gamma})x^*$ is a constant, this proves the claim. 
\end{proof}

In our numerical examples, we observe that $\Psi$ is often $t$-averaged with $t$ close to $\frac12$.
By Lemma~\ref{lem:avaraged}, we see that in particular 
for $t = \frac12$
the oracle denoiser in \eqref{eq:oracle_denoise} coincides with our original denoiser
$ 
\mathcal D=I-\gamma \Psi
$
from \eqref{eq:denoiser}, so that it is averaged for this setting.

\section{Numerical Examples of PnP Algorithms with scaled cPNNs} \label{sec:numerics}
Finally, we demonstrate the performance of denoisers built from scaled cPNNs trained as in the previous section within the PnP framework.
As scaling parameter we choose $\gamma = 1.99 < 2$ to ensure convergence of the oracle denoiser within FBS-PnP.
We start with pure denoising, but with varying noise levels.
One advantage of PnP methods is that they achieve good results for a whole range of noise levels, even though the denoiser is only trained for one particular value.  
Then, we present deblurring results. 
As already mentioned, we observe numerically that our learned networks $\Psi$ are $t$-averaged with $t$ close to $0.5$.
The actual estimation of $t$ for $\Psi$ is described in the following remark.

\begin{remark} (Numerical averaging parameter)  \label{rem:estimating_alpha}
An operator $T\colon\R^m\to\R^m$ is $t$-averaged if and only if 
$T=(1-t)I_m+t R$ for some non-expansive operator $R$. 
Thus, it suffices to check if $R\coloneqq\tfrac1t T-\tfrac{1-t}{t} I_m$ is non-expansive.
To verify this numerically, we sample points $x_1,\ldots,x_N$, $N = 10^5$, uniformly from $[0,1]^m$. 
Then, we check whether it holds
\begin{align}
\|JR(x_i)\|_2\leq 1, \quad i=1,\ldots,N,\label{nonexp_check}
\end{align}
where $JR$ denotes the Jacobian of $R$ and $\|\cdot\|_2$ is the spectral norm. 
Note that we can use a matrix-free implementation of the power method to approximate this norm.
If \eqref{nonexp_check} does not hold true, this implies that $R$ is actually expansive.
Otherwise, we consider $R$ as numerically non-expansive.
Now, for finding
$$
t^*=\min\{t\in[\tfrac12,1]:T\text{ is }t\text{-averaged}\}
$$
we start with $t=\tfrac12$ and check if \eqref{nonexp_check} is fulfilled. If this is the case, we set $t^*=t$. Otherwise, we increase $t$ by $0.05$ and repeat this procedure.
For denoising applications, we observed that the estimated $t^*$ is closer to $\tfrac12$ for smaller noise levels.
This is not surprising as a mapping predicting the noise should be more contractive for small noise levels.
\end{remark}

\paragraph{PnP-Denoising}
Here, we apply our cPNNs from the previous section trained for the noise level $\sigma = 25/255 \approx 0.098$ and $\gamma = 1.99$.
Using the procedure in Remark~\ref{rem:estimating_alpha}, we obtain that the numerical averaging parameter of $\Psi$ is $t=0.6$.
We apply the oracle denoiser \eqref{eq:oracle_denoise} with $t = 0.6$, which is averaged by Lemma \ref{lem:avaraged}, and a parameter $\eta\in(0,2)$ optimized via grid search. 
As oracle we use the output of BM3D \cite{DFKE2007} applied to our original noisy image.
For this purpose, the noise level parameter of BM3D is adapted to the corresponding level.
If the noise level is unknown, we can estimate it as described in \cite{SDA2015}.
Note that the data fidelity term $f\colon \R^m\to\R$ in \eqref{eq:sum} is chosen as 
$f(y)=\tfrac12\|x-y\|^2$ such that $\nabla f$ has Lipschitz constant 1 and
by Proposition \ref{conv_ADMM_PnP}i) FBS-PnP converges for $\eta\in(0,2)$.

Now, we denoise images from the BSD68 test set 
corrupted by Gaussian noise with different noise levels $\sigma\in\{0.075,0.1,0.125,0.15\}$. 
As starting iterate within FBS-PnP we use the noisy image.
The resulting PSNR values for the optimal $\eta$ are given in Table~\ref{tab:results_PnP_denoising}.
As comparison, we include the average PSNRs obtained with BM3D and the variational network proposed in \cite{EKKP2020}.
Further, we repeat the experiment with $\gamma=5$ and the denoiser $\mathcal D=I-\gamma \Psi$ without an oracle.
Even though we cannot apply Lemma~\ref{lem:avaraged} to show convergence of FBS-PnP in this case, we observe it numerically.
In Figure~\ref{fig_pnp_noise} we included an example image.
Note that the result obtained with BM3D appears to be more blurred than the FBS-PnP result.
\begin{table}[t]
\centering
\begin{tabular}{c|cccc}
Method&$\sigma=0.075$,&$\sigma=0.1$,&$\sigma=0.125$,&$\sigma=0.15$\\
&$\eta=1.35$&$\eta=0.93$&$\eta=0.72$&$\eta=0.58$\\\hline
Noisy images&$22.50$&$20.00$&$18.06$&$16.48$\\
FBS-PnP with cPNN, $\gamma=1.99$&$30.12$&$28.80$&$27.82$&$27.06$\\
FBS-PnP with cPNN, $\gamma=5\phantom{.00}$&$30.25$&$28.91$&$27.92$&$27.12$\\
Variational network \cite{EKKP2020}&$30.05$&$28.72$&$27.72$&$26.95$\\
BM3D \cite{DFKE2007}&$29.88$&$28.50$&$27.50$&$26.73$
\end{tabular}
\caption{Average PSNR values for denoising images from the BSD68 test set with Gaussian noise for different noise levels $\sigma$ using cPNNs within the FBS-PnP, a variational network and BM3D.}
\label{tab:results_PnP_denoising}
\end{table}

\paragraph{PnP-Deblurring}
Finally, we want to apply cPNNs within PnP for image deblurring. 
To ensure comparability of the results, we use the same setting as in \cite{EKKP2020}. 
That is, we generate the blurred images by applying a blur operator 
$B\colon \R^{d_1,d_2}\to\R^{d_1-8,d_2-8}$ defined as convolution with the normalized version of the kernel $k\in\R^{9,9}$ given by
$$
k_{ij}=\exp\biggl(-\frac{i^2+j^2}{2\tau^2}\biggr),\quad i,j\in\{-4,\ldots,4\}
$$
and adding Gaussian noise with standard deviation $\sigma=0.01$.
As denoiser for PnP we choose $\mathcal D = I-1.99\Psi$ with a cPNN $\Psi$, which is trained for the noise level $\sigma = 0.005$.
For the cPNN $\Psi$, the estimated averaging constant is $t=0.5$.
Consequently, also the resulting denoiser $\mathcal D$ is firmly non-expansive by Lemma~\ref{lem:avaraged}.
According to Proposition~\ref{conv_ADMM_PnP}, FBS-PnP (Algorithm \ref{alg:FBS}) converges for $f\colon \R^d\to\R$ with L-Lipschitz gradient and any $\eta\in(0,\tfrac2L)$.
Further, ADMM-PnP (Algorithm \ref{alg:ADMM}) converges for any $f\in\Gamma_0(\R^m)$.
Note that this enables us to apply ADMM-PnP also for tasks where the data fidelity term is non-smooth, see \cite{BSS2017}.
For our concrete problem, we choose the data fidelity term $f(y)=\tfrac12\|By-x\|^2$, 
where $x$ is the blurred and noisy observation.
Since this function has a $1$-Lipschitz gradient, FBS-PnP converges for any $\eta\in(0,2)$.

Now, we apply both FBS-PnP and ADMM-PnP for reconstructing the original images 
from the blurred ones in the BSD68 test set with blur factor $\tau\in\{1.25,1.5,1.75,2.0\}$. 
Here, we optimize $\eta\in(0,2)$ via grid search and observe 
that the optimal $\eta$ is given by $\eta=1.9$ for FBS-PnP and by $\eta=0.52$ for ADMM-PnP independent of $\tau$.
For comparison, we also include results obtained by the $L_2$-TV model  \cite{ROF1992}, 
where the parameters are also optimized via grid search.
The resulting PSNR values are given in Table~\ref{tab:results_PnP_deblurring} 
and an example image is provided in Figure~\ref{fig_pnp_blur}.
As expected, FBS-PnP and ADMM-PnP yield similar results.
If we have a closer look at the reconstruction obtained with the $L_2$-TV model, we observe the typical stair casing effect.
Such problems do not occur for the PnP based approach.

\begin{figure}[p]
	\begin{subfigure}[t]{0.25\textwidth}
		\includegraphics[width=\textwidth]{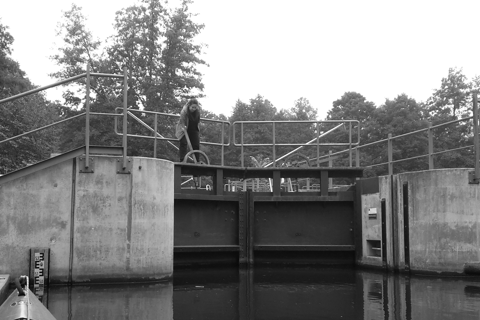}
	\end{subfigure}\hfill
	\begin{subfigure}[t]{0.25\textwidth}
		\includegraphics[width=\textwidth]{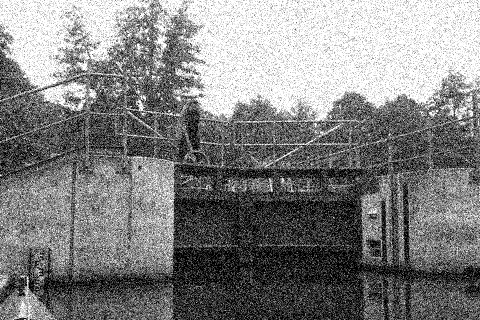}
	\end{subfigure}\hfill
	\begin{subfigure}[t]{0.25\textwidth}
		\includegraphics[width=\textwidth]{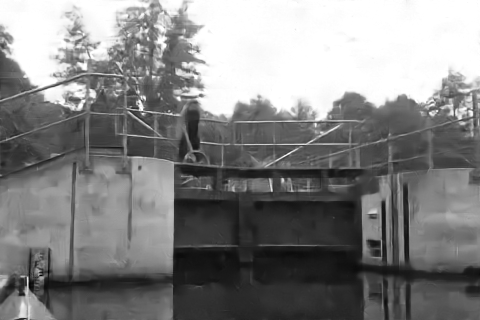}
	\end{subfigure}\hfill
	\begin{subfigure}[t]{0.25\textwidth}
		\includegraphics[width=\textwidth]{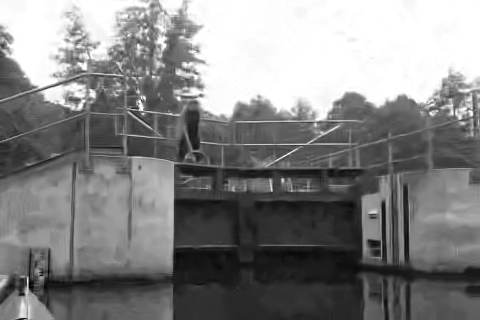}
	\end{subfigure}
	\begin{subfigure}[t]{0.25\textwidth}
		\includegraphics[width=\textwidth]{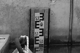}
		\caption*{\scriptsize Original}
	\end{subfigure}\hfill
	\begin{subfigure}[t]{0.25\textwidth}
		\includegraphics[width=\textwidth]{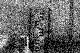}
		\caption*{\scriptsize Noisy\\PSNR $16.44$}
	\end{subfigure}\hfill
	\begin{subfigure}[t]{0.25\textwidth}
		\includegraphics[width=\textwidth]{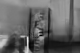}
		\caption*{\scriptsize FBS-PnP with cPNN\\PSNR $27.00$}
	\end{subfigure}\hfill
	\begin{subfigure}[t]{0.25\textwidth}
		\includegraphics[width=\textwidth]{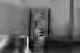}
		\caption*{\scriptsize BM3D\\PSNR $26.62$}
	\end{subfigure}
	\caption{Denoising results with Gaussian noise with standard deviation $\sigma = 0.15$ using FBS-PnP with a cPNN as denoiser and BM3D.}
	\label{fig_pnp_noise}
\end{figure}
\begin{table}[p]
\centering
\begin{tabular}{c|cccc}
Method&$\tau=1.25$,&$\tau=1.5$,&$\tau=1.75$,&$\tau=2.0$\\\hline
Blurred images&$26.46$&$25.60$&$24.98$&$24.53$\\
FBS-PnP with cPNN&$29.78$&$28.62$&$27.70$&$26.98$\\
ADMM-PnP with cPNN&$29.78$&$28.61$&$27.70$&$26.96$\\
Variational network \cite{EKKP2020}&$29.95$&$28.76$&$27.87$&$27.13$\\
$L_2$-TV, $\lambda=0.001$&$29.14$&$28.08$&$27.22$&$26.53$
\end{tabular}
\caption{Average PSNRs for deblurring images from the BSD68 test set with different blur factors $\tau$ using various methods.}
\label{tab:results_PnP_deblurring}
\end{table}
\begin{figure}[p]
\begin{subfigure}[t]{0.25\textwidth}
\includegraphics[width=\textwidth]{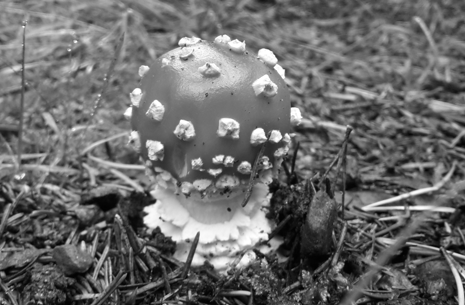}
\end{subfigure}\hfill
\begin{subfigure}[t]{0.25\textwidth}
\includegraphics[width=\textwidth]{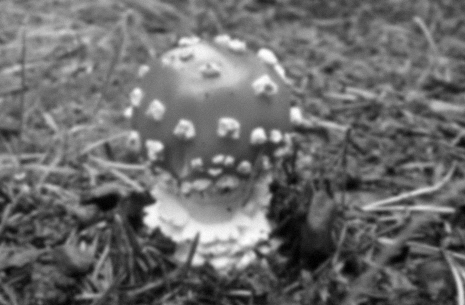}
\end{subfigure}\hfill
\begin{subfigure}[t]{0.25\textwidth}
\includegraphics[width=\textwidth]{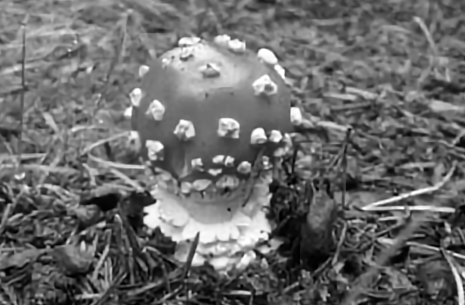}
\end{subfigure}\hfill
\begin{subfigure}[t]{0.25\textwidth}
\includegraphics[width=\textwidth]{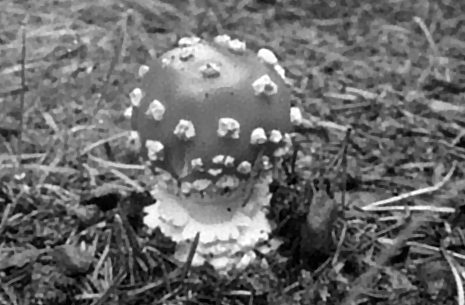}
\end{subfigure}
\begin{subfigure}[t]{0.25\textwidth}
\includegraphics[width=\textwidth]{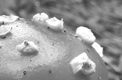}
\caption*{\scriptsize Original}
\end{subfigure}\hfill
\begin{subfigure}[t]{0.25\textwidth}
\includegraphics[width=\textwidth]{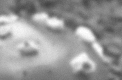}
\caption*{\scriptsize Blurred\\PSNR $25.21$}
\end{subfigure}\hfill
\begin{subfigure}[t]{0.25\textwidth}
\includegraphics[width=\textwidth]{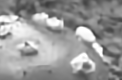}
\caption*{\scriptsize FBS-PnP with cPNN\\PSNR $30.51$}
\end{subfigure}\hfill
\begin{subfigure}[t]{0.25\textwidth}
\includegraphics[width=\textwidth]{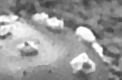}
\caption*{\scriptsize $L_2$-TV\\PSNR $29.77$}
\end{subfigure}
\caption{Deblurring results with blur factor $\tau=1.5$ and Gaussian noise with $\sigma = 0.01$ using FBS-PnP with a cPNN based denoiser and $L_2$-TV.}
\label{fig_pnp_blur}
\end{figure}

\section{Conclusions} \label{sec:conclusions}
In this paper, we extended the PNNs proposed in \cite{HHNPSS2019} to convolutional ones.
For filters with full length, this results in a similar manifold structure as for PNNs and hence a stochastic gradient descent algorithm on some submanifold of the Stiefel manifold can be used to learn these networks.
Unfortunately, this is not true anymore for filters with limited length and a different approach is necessary.
More precisely, we minimized functionals that incorporate an approximation of the ortogonality constraint on $T$ via the Frobenius norm $\|I - T^\tT T\|_F^2$.
At the end of the minimization procedure, a feasible candidate satisfying the constraint is obtained via projection.
We demonstrated how cPNNs can be trained for denoising and observed that the scaling parameter, which is an upper bound for the Lipschitz constant of the network, crucially influences the denoising results.
These observations are exploited within a PnP framework, where we established various convergence guarantees.
Possibly, the provided convergence results can be extended using the theory of almost non-expansive mappings, see \cite{LTT18}.
In the future, we want to exploit the property that we have access to the Lipschitz constant of our scaled cPNNs in other settings, e.g., towards stable invertible NNs, see \cite{BVWG20,HN2020}.

\section*{Acknowledgment}
Funding by the German Research Foundation (DFG) within project STE 571/16-1 and  by the DFG under Germany's Excellence Strategy – The Berlin Mathematics Research Center MATH+ (EXC-2046/1, Projektnummer: 390685689)
is acknowledged.

\appendix

\section{Activation Functions \label{act} }
Table~\ref{prox:act} contains various common activation functions, which are proximal operators.
{\scriptsize
\begin{table}[t]
{\scriptsize
\begin{center}
\begin{tabular}{| l | c | c | }
\hline
 Name & $\sigma_\alpha(x)$ &  $f_\alpha(x)$ 
\\
\hline
\hline
\begin{tabular}{ll}
i) &Linear activation 
\end{tabular}
& $x$ & $0$
		\\[0.5ex]               
		\hline
 \shortstack[c]{\\[0.1ex]\begin{tabular}{ll} ii) &Rectified linear unit \\ &(ReLU)\end{tabular}} 
& 
	$\begin{array}{cl}
		x & \text{if\quad$x>0$} \\
        0 & \text{if\quad$x\leq 0$}\\
  \end{array}$ 
	& $\iota_{[0,\infty)}=
  \begin{array}{ll}
		0 & \text{if\quad$x\in [0,\infty)$} \\
        \infty & \text{if\quad$x\notin [0,\infty)$}\\
  \end{array}$
  \\[0.5ex]
	\hline
 	\shortstack[c]{\\[0.2ex]\begin{tabular}{ll}iii) &Parametric rectified\\ &linear unit (pReLU) \end{tabular}} &  
	$
    \begin{array}{cl}
		x & \text{if\quad$x>0$} \\
        \alpha x & \text{if\quad$x\leq 0$}\\
  \end{array}$ , 
	$\alpha\in [0,1]$ & $
   \begin{array}{cl}
		0 & \text{if\quad$x>0$} \\
        (\frac{1}{\alpha}-1)x^2/2 & \text{if\quad$x\leq 0$}\\
  \end{array}$                                 
	\\[2ex]
	\hline 
		\shortstack[c] {\\[0.8ex]\begin{tabular}{ll} iv) &Saturated linear\\ &activation (SaLU) \end{tabular}
		} &  
		$		
    \begin{array}{cl}
		\alpha & \text{if\quad$x>\alpha$} \\
        x & \text{if\quad$-\alpha\leq x\leq \alpha$}\\
        -\alpha & \text{if\quad$x<-\alpha$}
  \end{array}
	$ 
	& $\iota_{[-1,1]}=
  \begin{cases}
		0 & \text{if\quad$x\in [-1,1]$} \\
        \infty & \text{if\quad$x\notin [-1,1]$}\\
  \end{cases}$
  \\[0.5ex]
	\hline
	\shortstack[c]{\\[0.2ex] \begin{tabular}{ll}v) &Bent identity\\ &activation\end{tabular}} & 
	$ \dfrac{x+\sqrt{x^2+\alpha^2}}{2}$ & $
  \begin{array}{cl}
		x/2-\ln(x+\frac{1}{2})/4 & \text{if\quad$x>-1/2$} \\
        \infty & \text{if\quad$x\leq -1/2$}\\
  \end{array}$
	\\[0.5ex]
	\hline
	  \shortstack[c]{\\[2.3ex]\begin{tabular}{ll}vi) &Soft Thresholding \end{tabular}} &
		$\begin{array}{cl}
		x-\alpha & \text{if\quad$x> \alpha$} \\
		0 & \text{if\quad$x \in [-\alpha,\alpha]$}\\
                x+\alpha & \text{if\quad$x < -\alpha$}  \\
  \end{array}$ 
	& $\alpha |x|$	
		\\[0.5ex]\hline
  	\shortstack[c]{\\[1ex]\begin{tabular}{ll}vii) &Elliot activation \end{tabular}}& 
	$\dfrac{x}{|\alpha x |+1}$ & $
  \begin{array}{cl}
		-|x|-\ln(1-|x|)-x^2/2 & \text{if\quad$|x|<1$} \\
        \infty & \text{if\quad$|x|\geq 1$}\\
  \end{array}$
	    \\[0.5ex]\hline  
		\shortstack[c]{\\[0.2ex] \begin{tabular}{ll}viii) &Inverse square\\ &root unit\end{tabular}} & $
        \dfrac{x}{\sqrt{(\alpha x)^2+1}}$ & $
  \begin{array}{cl}
		-x^2/2-\sqrt{1-x^2} & \text{if\quad$|x|\leq 1$} \\
        \infty & \text{if\quad$|x|>1$}\\
  \end{array}$
	\\[2.2ex]
	\hline	
  \shortstack[c]{\begin{tabular}{ll}ix) &Inverse square root \\& linear unit\end{tabular}} & $
    \begin{array}{cl}
		x & \text{if\quad$x\geq 0$} \\
        \dfrac{x}{\sqrt{(\alpha x)^2+1}} & \text{if\quad$x<0$}\\
  \end{array}$ & $
  \begin{array}{cl}
  		0 & \text{if\quad$x\geq 0$} \\
		1-x^2/2-\sqrt{1-x^2} & \text{if\quad$-1\leq x<0$} \\
        \infty & \text{if\quad$x<-1$}\\
  \end{array}$
		\\
	\hline
	\end{tabular}
	\caption{Stable activation functions $\sigma_\alpha$ and $f_\alpha(x)$ with $\sigma_\alpha = \prox_{f_\alpha}$,	where $\alpha$ can be skipped if the function is parameter free, see \cite{CP2018}.\label{prox:act}}
\end{center}
}
\end{table}
}

\section{Proof of Proposition \ref{conv_ADMM_PnP} }
\begin{proof}
	i)
	As the concatenation of averaged operators is averaged, see Theorem~\ref{alpha_lin}, and the iteration sequence generated by an averaged operator
	converges globally, it suffices to show that $I-\eta \nabla f$ is averaged. 
	By the Baillon--Haddad theorem \cite[Cor.~16.1]{BC2011},
	we get that $\tfrac1L\nabla f$ is firmly non-expansive, 
	i.e., there exists some non-expansive mapping $R$ such that $\tfrac1L\nabla f=\tfrac12(I+R)$. 
	Thus, it holds
	$$
	I-\eta\nabla f =I-\tfrac{\eta L}{2}(I+R) = (1-\tfrac{\eta L}{2})I+\tfrac{\eta L}{2}(-R).
	$$
	Consequently, the operator $I-\eta\nabla f$ is averaged for $\eta \in(0,\tfrac2L)$. 
	\\
	ii)
	Defining
	\begin{equation}\label{iter}
		t^{(r+1)} \coloneqq  \tfrac{1}{\gamma} p^{(r)} +  x^{(r+1)},
	\end{equation}
	we can rewrite the second step of the ADMM-PnP algorithm as
	\begin{equation} \label{yps}
		y^{(r+1)} = \Psi \big( t^{(r+1)} \big).
	\end{equation}
	Then, the third step can be written as
	\begin{align}
		p^{(r+1)}&= p^{(r)} + \gamma\bigl( x^{(r+1)} -  y^{(r+1)}\bigr) = p^{(r)} + \gamma x^{(r+1)} - \gamma \Psi \bigl( t^{(r+1)}\bigr) \nonumber\\
		&= \gamma \bigl( t^{(r+1)} -  \Psi( t^{(r+1)} ) \bigr). \label{pe}
	\end{align}
	Further, the first step
	\begin{equation} \label{fundament_1}
		x^{(r+1)} = \prox_{\tfrac1\gamma f}\bigl(y^{(r)} -  \tfrac{1}{\gamma}p^{(r)}\bigr)
	\end{equation}
	can be rewritten using the Moreau decomposition and \eqref{iter} as
	\begin{align}
		x^{(r+1)} &=  y^{(r)} -  \tfrac{1}{\gamma}p^{(r)} - \tfrac{1}{\gamma} \prox_{\gamma f^*}\bigl(\gamma y^{(r)} - p^{(r)}\bigr)\notag\\
		& = 2\Psi\bigl(t^{(r)}\bigr) -  t^{(r)} - \tfrac{1}{\gamma} \prox_{\gamma f^*}\bigl(2 \gamma \Psi(t^{(r)}) - \gamma t^{(r)}\bigr).\label{x}
	\end{align}
	Plugging \eqref{pe} and \eqref{x} into \eqref{iter}, we conclude
	\begin{align*}
		t^{(r+1)} & = t^{(r)} - \Psi\bigl(t^{(r)}\bigr) + 2\Psi\bigl(t^{(r)}\bigr) -  t^{(r)} - \tfrac{1}{\gamma} \prox_{\gamma f^*} \bigl(2 \gamma \Psi(t^{(r)}) - \gamma t^{(r)}\bigr)\\
		&=  \Psi\bigl(t^{(r)}\bigr) - \tfrac{1}{\gamma} \prox_{\gamma f^*} \bigl( 2 \gamma \Psi(t^{(r)}) -  \gamma t^{(r)} \bigr).
	\end{align*}
	With 
	$$R_1 \coloneqq \gamma \left(2 \Psi -  I \right), \quad R_2 \coloneqq I - 2 \prox_{\gamma f^*}$$
	this results in
	\begin{equation}
		t^{(r+1)} = \underbrace{\tfrac12 \bigl( I + \tfrac{1}{\gamma} R_2 \circ R_1 \bigr)}_{\mathcal T} ( t^{(r)} ).
	\end{equation}
	Since $\Psi$ and the proximity operator are $\frac12$-averaged,
	we know that the reflections $R_1/\gamma$ and $R_2$ are non-expansive. 
	Thus, ${\mathcal T}$ is also firmly non-expansive and the sequence $\{t^{(r)}\}_r$ converges globally.
	As both $\Psi$ and $\prox_{\gamma f^*}$ are continuous, the same holds true by \eqref{yps}, \eqref{pe} and \eqref{x} for $\{y^{(r)}\}_r$, $\{p^{(r)}\}_r$
	and $\{x^{(r)}\}_r$. 
\end{proof}
\bibliographystyle{abbrv}
\bibliography{references}

\begin{thebibliography}{10}

\bibitem{AMS08}
P.-A. Absil, R.~Mahony, and R.~Sepulchre.
\newblock {\em Optimization Algorithms on Matrix Manifolds}.
\newblock Princeton University Press, 2008.

\bibitem{BC2011}
H.~H. Bauschke and P.~L. Combettes.
\newblock {\em Convex Analysis and Monotone Operator Theory in Hilbert Spaces}.
\newblock Springer, New York, 2011.

\bibitem{Beck17}
A.~Beck.
\newblock {\em First-Order Methods in Optimization}, volume~25 of {\em MOS-SIAM
  Series on Optimization}.
\newblock SIAM, Philadelphia, 2017.

\bibitem{BVWG20}
J.~Behrmann, P.~Vicol, K.-C. Wang, R.~Grosse, and J.-H. Jacobsen.
\newblock Understanding and mitigating exploding inverses in invertible neural
  networks.
\newblock {\em ArXiv preprint arXiv:2006.09347}, 2020.

\bibitem{bini1983spectral}
D.~Bini and M.~Capovani.
\newblock Spectral and computational properties of band symmetric {T}oeplitz
  matrices.
\newblock {\em Linear Algebra Appl.}, 52/53:99--126, 1983.

\bibitem{BPCPE11}
S.~Boyd, N.~Parikh, E.~Chu, B.~Peleato, and J.~Eckstein.
\newblock Distributed optimization and statistical learning via the alternating
  direction method of multipliers.
\newblock {\em Found. Trends Mach. Learn.}, 3(1):101--122, 2011.

\bibitem{Braides02}
A.~Braides.
\newblock {\em {$\Gamma$}-Convergence for Beginners}.
\newblock Oxford University Press, Oxford, 2002.

\bibitem{BSS2017}
M.~Burger, A.~Sawatzky, and G.~Steidl.
\newblock First order algorithms in variational image processing.
\newblock In {\em Operator Splittings and Alternating Direction Methods}.
  Springer, 2017.

\bibitem{BX08}
R.~Byers and H.~Xu.
\newblock A new scaling for {N}ewton's iteration for the polar decomposition
  and its backward stability.
\newblock {\em SIAM J. Matrix Anal. Appl.}, 30(2):822--843, 2008.

\bibitem{CWE2016}
S.~H. Chan, X.~Wang, and O.~A. Elgendy.
\newblock Plug-and-play {ADMM} for image restoration: Fixed-point convergence
  and applications.
\newblock {\em IEEE Trans. Comput. Imaging}, 3:84--98, 2016.

\bibitem{CP2018}
P.~L. Combettes and J.-C. Pesquet.
\newblock Deep neural network structures solving variational inequalities.
\newblock {\em Set-Valued Var. Anal.}, 28:491--518, 2020.

\bibitem{CW05}
P.~L. Combettes and V.~R. Wajs.
\newblock Signal recovery by proximal forward-backward splitting.
\newblock {\em Multiscale Model. Simul.}, 4(4):1168--1200, 2005.

\bibitem{CY15}
P.~L. Combettes and I.~Yamada.
\newblock Compositions and convex combinations of averaged nonexpansive
  operators.
\newblock {\em J. Math. Anal. Appl.}, 425(1):55--70, 2015.

\bibitem{CKCH2020}
L.~Condat, D.~Kitahara, A.~Contreras, and A.~Hirabayashi.
\newblock Proximal splitting algorithms: Relax them all!
\newblock {\em ArXiv Preprint arXiv:1912.00137}, 2019.

\bibitem{DFKE2007}
K.~Dabov, A.~Foi, V.~Katkovnik, and K.~Egiazarian.
\newblock Image denoising by sparse {3D} transform-domain collaborative
  filtering.
\newblock {\em IEEE Trans. Image Process.}, 16(8):2080--2095, 2007.

\bibitem{DKE2012}
A.~Danielyan, V.~Katkovnik, and K.~Egiazarian.
\newblock {BM3D} frames and variational image deblurring.
\newblock {\em IEEE Trans. Image Process.}, 21(4):1715--1728, 2012.

\bibitem{DDM04}
I.~Daubechies, M.~Defrise, and C.~De~Mol.
\newblock An iterative thresholding algorithm for linear inverse problems with
  a sparsity constraint.
\newblock {\em Commun. Pure Appl. Math.}, 57(11):1413--1457, 2004.

\bibitem{DMM2009}
D.~L. Donoho, A.~Maleki, and A.~Montanari.
\newblock Message-passing algorithms for compressed sensing.
\newblock {\em Proc. Natl. Acad. Sci.}, 106(45):18914--18919, 2009.

\bibitem{EB92}
J.~Eckstein and D.~P. Bertsekas.
\newblock On the {D}ouglas-{R}achford splitting method and the proximal point
  algorithm for maximal monotone operators.
\newblock {\em Math. Program.}, 55:293--318, 1992.

\bibitem{EKKP2020}
A.~Effland, E.~Kobler, K.~Kunisch, and T.~Pock.
\newblock Variational networks: an optimal control approach to early stopping
  variational methods for image restoration.
\newblock {\em J. Math. Imaging Vis.}, 62(3):396--416, 2020.

\bibitem{GFPC18}
H.~Gouk, E.~Frank, B.~Pfahringer, and M.~Cree.
\newblock Regularisation of neural networks by enforcing {L}ipschitz
  continuity.
\newblock {\em ArXiv preprint arXiv:1804.04368}, 2018.

\bibitem{GJNMU2018}
H.~Gupta, K.~H. Jin, H.~Q. Nguyen, M.~T. McCann, and M.~Unser.
\newblock {CNN}-based projected gradient descent for consistent {CT} image
  reconstruction.
\newblock {\em IEEE Trans. Med. Imaging}, 37(6):1440--1453, 2018.

\bibitem{HN2020}
P.~Hagemann and S.~Neumayer.
\newblock Stabilizing invertible neural networks using mixture models.
\newblock {\em ArXiv preprint :2009.02994}, 2020.

\bibitem{HHNPSS2019}
M.~Hasannasab, J.~Hertrich, S.~Neumayer, G.~Plonka, S.~Setzer, and G.~Steidl.
\newblock Parseval proximal neural networks.
\newblock {\em J. Fourier Anal. Appl.}, 26:59, 2020.

\bibitem{HS2020}
J.~Hertrich and G.~Steidl.
\newblock Inertial stochastic {PALM} and its application for learning
  {S}tudent-$t$ mixture models.
\newblock {\em ArXiv preprint arXiv:2005.02204}, 2020.

\bibitem{High86}
N.~J. Higham.
\newblock Computing the polar decomposition--with applications.
\newblock {\em SIAM J. Sci. Statist. Comput.}, 7(4):1160--1174, 1986.

\bibitem{Higham08}
N.~J. Higham.
\newblock {\em Functions of Matrices: Theory and Computation}.
\newblock SIAM, Philadelphia, 2008.

\bibitem{HS2013}
R.~A. Horn and C.~R. Johnson.
\newblock {\em Matrix Analysis}.
\newblock Oxford University Press, 2013.

\bibitem{huang2018orthogonal}
L.~Huang, X.~Liu, B.~Lang, A.~W. Yu, Y.~Wang, and B.~Li.
\newblock Orthogonal weight normalization: Solution to optimization over
  multiple dependent {S}tiefel manifolds in deep neural networks.
\newblock In {\em 32nd AAAI Conference on Artificial Intelligence}, 2018.

\bibitem{KB2014}
D.~P. Kingma and J.~Ba.
\newblock Adam: A method for stochastic optimization.
\newblock {\em ArXiv preprint arXiv:1412.6980}, 2014.

\bibitem{Kr55}
M.~A. Krasnoselskii.
\newblock Two observations about the method of successive approximations.
\newblock {\em Uspekhi Matematicheskikh Nauk}, 10:123--127, 1955.
\newblock In Russian.

\bibitem{LLT2020}
J.~Li, F.~Li, and S.~Todorovic.
\newblock Efficient {R}iemannian optimization on the {S}tiefel manifold via the
  {C}ayley transform.
\newblock In {\em 8th International Conference on Learning Representations,
  {ICLR} 2020, Addis Abeba, Ethiopia, April 26-30, 2020}, 2020.

\bibitem{Ma53}
W.~R. Mann.
\newblock Mean value methods in iteration.
\newblock {\em Proc. Amer. Math. Soc.}, 16(4):506--510, 1953.

\bibitem{MFTM2001}
D.~Martin, C.~Fowlkes, D.~Tal, and J.~Malik.
\newblock A database of human segmented natural images and its application to
  evaluating segmentation algorithms and measuring ecological statistics.
\newblock In {\em Proc. Eighth IEEE International Conference on Computer
  Vision. ICCV 2001}, volume~2, pages 416--423. IEEE, 2001.

\bibitem{MMHC2017}
T.~Meinhardt, M.~Moeller, C.~Hazirbas, and D.~Cremers.
\newblock Learning proximal operators: {U}sing denoising networks for
  regularizing inverse imaging problems.
\newblock In {\em Proc. IEEE International Conference on Computer Vision},
  pages 1799--1808, 2017.

\bibitem{MKKY2018}
T.~Miyato, T.~Kataoka, M.~Koyama, and Y.~Yoshida.
\newblock Spectral normalization for generative adversarial networks.
\newblock In {\em International Conference on Learning Representations}, 2018.

\bibitem{MLE19}
V.~Monga, Y.~Li, and Y.~Eldar.
\newblock Algorithm unrolling: Interpretable, efficient deep learning for
  signal and image processing.
\newblock {\em ArXiv Preprint arXiv:1912.10557}, 2019.

\bibitem{Moreau65}
J.-J. Moreau.
\newblock Proximit\'{e} et dualit\'{e} dans un espace {H}ilbertien.
\newblock {\em Bulletin de la Soci\'{e}t\'{e} Math\'{e}matique de France},
  93:273--299, 1965.

\bibitem{NA2005}
Y.~Nishimori and S.~Akaho.
\newblock Learning algorithms utilizing quasi-geodesic flows on the {S}tiefel
  manifold.
\newblock {\em Neurocomputing}, 67:106--135, 2005.

\bibitem{Ono2017}
S.~Ono.
\newblock Primal-dual plug-and-play image restoration.
\newblock {\em IEEE Signal Process. Lett.}, 24(8):1108--1112, 2017.

\bibitem{PS99}
D.~Potts and G.~Steidl.
\newblock Preconditioners for ill--conditioned {T}oeplitz matrices.
\newblock {\em BIT}, 39(3):513--533, 1999.

\bibitem{CLPKS2017}
J.~Rick~Chang, C.-L. Li, B.~Poczos, B.~Vijaya~Kumar, and A.~C.
  Sankaranarayanan.
\newblock One network to solve them all--solving linear inverse problems using
  deep projection models.
\newblock In {\em Proc. of the IEEE International Conference on Computer
  Vision}, pages 5888--5897, 2017.

\bibitem{REM2017}
Y.~Romano, M.~Elad, and P.~Milanfar.
\newblock The little engine that could: {R}egularization by denoising ({RED}).
\newblock {\em SIAM J. Imaging Sci.}, 10(4):1804--1844, 2017.

\bibitem{ROF1992}
L.~Rudin, S.~Osher, and E.~Fatemi.
\newblock Nonlinear total variation based noise removal algorithms.
\newblock {\em Physica D}, 60:259--268, 1992.

\bibitem{LTT18}
D.~Russell~Luke, N.~H. Thao, and M.~K. Tam.
\newblock Quantitative convergence analysis of iterated expansive, set-valued
  mappings.
\newblock {\em Math. Oper. Res.}, 43(4):1143--1176, 2018.

\bibitem{SGL2019}
H.~Sedghi, V.~Gupta, and P.~M. Long.
\newblock The singular values of convolutional layers.
\newblock In {\em International Conference on Learning Representations}, 2019.

\bibitem{Se2011}
S.~Setzer.
\newblock Operator splittings, {B}regman methods and frame shrinkage in image
  processing.
\newblock {\em Int. J. Comput. Vis.}, 92(3):265--280, 2011.

\bibitem{SKM2019}
H.~Sommerhoff, A.~Kolb, and M.~Moeller.
\newblock Energy dissipation with plug-and-play priors.
\newblock In {\em NeurIPS 2019 Workshop}, 2019.

\bibitem{SVW2016}
S.~Sreehariand, S.~V. Venkatakrishnan, and B.~Wohlberg.
\newblock Plug-and-play priors for bright field electron tomography and sparse
  interpolation.
\newblock {\em IEEE Trans. Comput. Imaging}, 2:408--423, 2016.

\bibitem{strang2014functions}
G.~Strang and S.~MacNamara.
\newblock Functions of difference matrices are {T}oeplitz plus {H}ankel.
\newblock {\em SIAM Review}, 56(3):525--546, 2014.

\bibitem{SWK2019}
Y.~Sun, B.~Wohlberg, and U.~S. Kamilov.
\newblock An online plug-and-play algorithm for regularized image
  reconstruction.
\newblock {\em IEEE Trans. Comput. Imaging}, 5(3):395--408, 2019.

\bibitem{SDA2015}
C.~Sutour, C.-A. Deledalle, and J.-F. Aujol.
\newblock Estimation of the noise level function based on a nonparametric
  detection of homogeneous image regions.
\newblock {\em SIAM J. Imaging Sci.}, 8:2622--2661, 2015.

\bibitem{TBF2017}
A.~Teodoro, J.~M. Bioucas-Dias, and M.~Figueiredo.
\newblock Scene-adapted plug-and-play algorithm with convergence guarantees.
\newblock In {\em Proc. IEEE Int. Workshop on Machine Learning for Signal
  Processing}, 2007.

\bibitem{TRPW20}
M.~{Terris}, A.~{Repetti}, J.~{Pesquet}, and Y.~{Wiaux}.
\newblock Building firmly nonexpansive convolutional neural networks.
\newblock In {\em ICASSP 2020 - IEEE International Conference on Acoustics,
  Speech and Signal Processing}, pages 8658--8662, 2020.

\bibitem{TSS2018}
Y.~Tsuzuku, I.~Sato, and M.~Sugiyama.
\newblock Lipschitz-margin training: Scalable certification of perturbation
  invariance for deep neural networks.
\newblock In {\em Advances in Neural Information Processing Systems 31}, pages
  6541--6550. Curran Associates, Inc., 2018.

\bibitem{VBW13}
S.~V. Venkatakrishnan, C.~A. Bouman, and B.~Wohlberg.
\newblock Plug-and-play priors for model based reconstruction.
\newblock In {\em 2013 IEEE Global Conference on Signal and Information
  Processing}, pages 945--948. IEEE, 2013.

\bibitem{WY2013}
Z.~Wen and W.~Yin.
\newblock A feasible method for optimization with orthogonality constraints.
\newblock {\em Math. Program.}, 142(1--2):397--434, 2013.

\bibitem{ZZCMZ2017}
K.~{Zhang}, W.~{Zuo}, Y.~{Chen}, D.~{Meng}, and L.~{Zhang}.
\newblock Beyond a {G}aussian denoiser: Residual learning of deep {CNN} for
  image denoising.
\newblock {\em IEEE Trans. Image Process.}, 26(7):3142--3155, 2017.

\bibitem{ZZGZ2017}
K.~Zhang, W.~Zuo, S.~Gu, and L.~Zhang.
\newblock Learning deep {CNN} denoiser prior for image restoration.
\newblock In {\em Proc. of the IEEE Conference on Computer Vision and Pattern
  Recognition}, pages 3929--3938, 2017.

\end{thebibliography}
\end{document}